\documentclass[psamsfonts]{amsart}
\markboth{left head}{right head}
\usepackage{amsmath,amstext,amssymb,amsopn,amsthm}
\usepackage{amsmath,amssymb,amsthm}
\usepackage[mathscr]{eucal}
\usepackage{bbm}
\usepackage{color}
\usepackage{tikz}
\usetikzlibrary{arrows}
\usepackage[unicode,bookmarks,colorlinks]{hyperref}
\hypersetup{
    linkcolor=brickred,
}

\definecolor{mahogany}{cmyk}{0, 0.77, 0.87, 0}
\definecolor{salmon}{cmyk}{0, 0.53, 0.38, 0}
\definecolor{melon}{cmyk}{0, 0.46, 0.50, 0}
\definecolor{yellowgreen}{cmyk}{0.44, 0, 0.74, 0}
\definecolor{brickred}{cmyk}{0, 0.89, 0.94, 0.28}
\definecolor{OliveGreen}{cmyk}{0.64, 0, 0.95, 0.40}
\definecolor{RawSienna}{cmyk}{0, 0.72, 1.0, 0.45}
\definecolor{ZurichRed}{rgb}{1, 0, 0} 

\bibliographystyle{plain}
\numberwithin{equation}{section}

\newtheorem{thm}{Theorem}[section]        

\newtheorem{lem}{Lemma}[section]
\newtheorem{prop}{Proposition}[section]
\newtheorem{rmk}{Remark}[section]
\newtheorem{definition}{Definition}[section]
\newtheorem{example}{Example}[section]

\newenvironment{•}{•}{•}
\newcommand{\R}{\mathbb{R}}                  
\newcommand{\Rd}{\R^d}               
\newcommand{\iord}{\int_{\Rd}}              
\newcommand{\intd}[1]{\int_{#1} }    
\newcommand{\set}[1]{ \left\{#1\right\} }
\newcommand{\mysum}[3]{\sum\limits_{#1=#2}^{#3}}          

\newcommand{\wh}[1]{\widehat{#1}}              
\newcommand{\abs}[1]{\left|#1\right|}

\newcommand{\tgo}{t \rightarrow 0+}

\newcommand{\palp}{p_{t}^{(\alpha)}}
\newcommand{\Halp}{\mathbb{H}_{\Omega}^{(\alpha)}(t)}

\newcommand{\ld}{\abs{\Omega}}

\newcommand{\pthesis}[1]{\left(#1\right)}

\newcommand{\bd}{\partial\Omega}

\newcommand{\myH}{\mathbb{H}}

\newcommand{\myP}{\mathcal{P}}
\newcommand{\ck}[1]{p^{(1)}_{#1}}
\newcommand{\dom}{\Omega}
\newcommand{\cvf}{g_{\dom}}
\newcommand{\indf}[1]{\mathbbm{1}_{#1}}
\newcommand{\mylim}[2]{\lim\limits_{#1}{#2}}
\newcommand{\per}{Per(\dom)}
\newcommand{\lm}{\mathcal{L}(\Rd)}
\newcommand{\lo}{\ell_{\dom}}
\newcommand{\lone}[1]{||#1||_{L^{1}(\Rd)}}
\usepackage[margin=1.25in]{geometry}
\setlength{\textheight}{50pc} 
\setlength{\textwidth}{35pc}

\begin{document}

\title[ Heat content ]{Heat content estimates over sets  of finite perimeter.}
\author{Luis Acu\~na Valverde}
\address{Department of Mathematics, Universidad de Costa Rica, San Jos\'e, Costa Rica.}
\email{luis.acunavalverde@ucr.ac.cr/ guillemp22@yahoo.com}
\maketitle

\begin{abstract}
This paper studies by means of standard analytic tools   the small time behavior of the heat content over a bounded Lebesgue measurable set of finite perimeter by working with the set covariance function and by imposing conditions on the heat kernels. Applications concerning the heat kernels of rotational invariant $\alpha$-stable processes are given.
\end{abstract}
{\footnotesize {\bf Keywords}: covariance function, heat content, functions of bounded variation, stable processes, sets of finite perimeter.}

\section{introduction}
Let $I$ be a set of indices and $d\geq 2$ an integer. Consider a set of non-negative functions $$\set{p_t^{(\alpha)}(\cdot):\Rd\rightarrow [0,\infty], \alpha\in I, t\geq 0},$$ where each $p_t^{(\alpha)}(\cdot)$ will be called {\it heat kernel}. We shall assume that these heat kernels satisfy the following properties.

\begin{enumerate}
\item[(i)] For each $t>0$, $\palp(x)$ is  radial. That is, $\palp(x)=\palp(\abs{x})\geq 0$, $x\in \Rd$. Furthermore, we assume $p_t^{(\alpha)}(\cdot)\in L^{1}(\Rd)$.
\item[(ii)] Scaling Property: for each integer $d\geq 2$ and $\alpha\in I$, there exist $\beta=\beta(d,\alpha)\in \R$ and $\gamma=\gamma(d,\alpha)>0$  such that
\begin{align}\label{sp}
p_t^{(\alpha)}(x)=t^{\beta}p_{1}^{(\alpha)}(t^{-\gamma}x).
\end{align}
\end{enumerate}

As a consequence of  the aforementioned properties, we obtain
\begin{align}\label{scaling}
\lone{\palp}&=t^{\beta+d\gamma}\,\lone{p_1^{(\alpha)}},\\ \nonumber
\palp(x)&=\palp(\abs{x}e_d),
\end{align}
where $e_d$ stands for  the vector $(0,0,...,0,1)\in \Rd$.

Before continuing, we provide some useful notations. Throughout the paper, $\lm$ will denote   the set of all the Lebesgue measurable subsets of $\Rd$.  For a bounded  set $\dom \in \lm$  with non-empty boundary $\bd$, we set 
\begin{align*}
\ld&=\mbox{volume of } \dom,\\
\mathcal{H}^{d-1}(\bd)&= \mbox{$(d-1)$--Hausdorff measure of the boundary of}\,\, \dom.
\end{align*}

Henceforth, $B_r(x)$ will stand for  the ball centered at  $x\in\Rd$ with radius $r$ and for simplicity $B$ will represent the unit ball centered at zero. Also $S^{d-1}$ will denote the boundary of the unit ball $B$. Moreover, the volume  and   surface area of the unit ball in $\Rd$ will be denoted by
$w_{d}$ and $A_d$, respectively. That is,
\begin{align}\label{vaunitball}
w_d&=\frac{\,\pi^{\frac{d}{2}}}{\Gamma\pthesis{1+\frac{d}{2}}}, \\ \nonumber
A_d&=dw_d.
\end{align}

In addition, if  $g:\dom\subseteq\Rd \rightarrow \R$ is a Lipschitz function, we denote 
\begin{align*}
Lip(g)=\sup\set{  \frac{\abs{g(y)-g(x)}}{\abs{y-x}}:x,y\in \dom, x\neq y}.
\end{align*}

Let  $\dom \in \lm$ be a bounded set. The  purpose of the paper is to investigate the behavior as  $\tgo$ of the following function 
\begin{align}\label{defhc}
\Halp=\intd{\dom}dx\intd{\dom}dy\,\palp(x-y),
\end{align} 
which will be called {\it the heat content} of $\dom$ in $\Rd$ by imposing conditions over the heat kernel $\palp(\cdot)$ and the underlying set $\dom$. We remark that $\Halp$ is finite for all $t>0$ due to the assumption $\palp(\cdot)\in L^{1}(\Rd)$ and the inequality
$$0\leq \Halp\leq \intd{\dom}dx\intd{\Rd}dy\,\palp(x-y)= \ld\,\lone{\palp}.$$

The function $\Halp$ turns out to provide information about the geometry  of the  set $\dom$ as long as regularity conditions over $\dom$ are assumed. For instance, in \cite[Theorem 2.4]{Mir}  is proved  by taking $I=\set {2}$  and  considering the Gaussian kernel
\begin{align}\label{gk}
p_t^{(2)}(x)=(4\pi t)^{-\frac{d}{2}}\exp\pthesis{-\frac{\abs{x}^2}{4t}}
\end{align}
that
\begin{equation*}
\lim\limits_{\tgo}\frac{\ld-\myH^{(2)}_{\Omega}(t)}{\sqrt{t}}=\frac{1}{\sqrt{\pi}}\mathcal{H}^{d-1}(\bd),
\end{equation*}
for $\Omega$ a bounded domain with boundary being $C^2$.
In \cite{vanden3}, M. van den Berg  called $\myH^{(2)}_{\dom}(t)$  the heat content of $\Omega$ in $\Rd$ and therefore  following the terminology introduced by M. van den Berg, we have also called $\Halp$  the heat content of $\Omega$ in $\Rd$.  We refer the interested reader to the papers \cite{van6, van7, van8} for recent results concerning bounds and asymptotic behaviors of the heat content corresponding to the case  $I=\set{2}$ and the Gaussian kernel over open sets, polygonal  domains and its extensions when dealing with compact manifolds.

In order to investigate the small time behavior of $\Halp$, we need to introduce the notion of finite perimeter. We say that a bounded set $\dom \in \lm$ has finite perimeter if 
\begin{align}\label{defper}
0\leq  \sup\set{\intd{\dom}dx\,{\bf div} \varphi(x): \varphi\in C^{1}_{c}(\Rd,\Rd), ||\varphi||_{\infty}\leq 1}<\infty,
 \end{align}     
 and we denote the last quantity by $Per(\Omega)$.

Our main result is the following.
\begin{thm}\label{mthm}  Let $d\geq 2$ be an integer. Consider $\dom \in \lm$ a bounded set with $\per<\infty$ and  let $w_{d-1}$  and $A_d$ be the constants defined in \eqref{vaunitball}. 
\begin{enumerate}
\item[$ (i)$]Let $I_0=\set{\alpha\in I: |\cdot|\,p_1^{(\alpha)}(\cdot)\in L^{1}(\Rd)}.$ For each $\alpha \in I_0$, we have for all $t>0$ that 
\begin{align}\label{i1}
\lone{p_1^{(\alpha)}}\ld- t^{-(\beta+d\gamma)}\Halp\leq t^{\gamma} w_{d-1}\per\int_{0}^{\infty}dr\,r^d\,p_1^{(\alpha)}(r\,e_d).
\end{align}
Furthermore,
\begin{align}\label{lim1}
\mylim{\tgo}{\frac{\lone{p_1^{(\alpha)}}\ld- t^{-(\beta+d\gamma)}\Halp}{t^{\gamma}}}=w_{d-1}\per\int_{0}^{\infty}dr\,r^d\,p_1^{(\alpha)}(r\,e_d).
\end{align}

\item[$(ii)$] 
Let $$I_1=\set{\alpha \in I:
p^{(\alpha)}_{t}(x)=
\frac{\kappa \,t^{\beta}}
{\pthesis{1+\abs{t^{-\gamma}x}^{n}}^{m}}, \,\,d-nm=-1,\,\,n,m, \kappa>0},$$ and  denote the diameter of $\dom$ defined as
$\sup\set{\abs{x-y}:x,y\in \dom}$ by $\lo$.  Then, for all $t>0$ satisfying $t^{\gamma}<\lo$, we have
\begin{align*}
\lone{p_1^{(\alpha)}}\ld- t^{-(\beta+d\gamma)}\Halp\leq t^{\gamma}\pthesis{\lambda(\dom)+\kappa\,w_{d-1}\per\,\gamma\,
\ln\pthesis{\frac{1}{t}}},
\end{align*}
where 
$$\lambda(\dom)=\ld\, \lo^{-1}\,A_d\,\kappa+ \kappa\,w_{d-1}\per\,\pthesis{\ln(\lo)+\int_{0}^{1}\frac{dr\,r^d}{(1+r^n)^m}}.$$
In particular, we arrive at
\begin{align*}
\varlimsup_{\tgo}\frac{\lone{p_1^{(\alpha)}}\ld- t^{-(\beta+d\gamma)}\Halp}{t^{\gamma}\ln\pthesis{\frac{1}{t}}}\leq \kappa\,\per\,w_{d-1}\gamma.
\end{align*}

\item[$(iii)$]
Let $I_{2}\subset I$ be such that for each $\alpha \in I_2$, there are functions $\phi_{\alpha}:[0,1]\rightarrow [0, \infty)$,
$J_{\alpha}:\R^{d}\rightarrow (0, \infty]$  and $\Lambda_{\alpha}:\R^{d}\rightarrow (0, \infty]$ satisfying
\begin{align}\label{bc}
\frac{p_t^{(\alpha)}(x)}{\phi_{\alpha}(t)}\leq \Lambda_{\alpha}(x), \,\,x\neq 0, \,0<t\leq 1,
\end{align}
\begin{align}\label{lc}
\lim\limits_{\tgo}\frac{p_t^{(\alpha)}(x)}{\phi_{\alpha}(t)}=J_{\alpha}(x), \,x\neq 0,
\end{align}
and
\begin{align}\label{fi1}
\int_{\Omega}dx \int_{\Omega^c}dy\,J_{\alpha}(x-y)<\infty,\,\,\,\,
\int_{\Omega}dx\int_{\Omega^c}dy\,\Lambda_{\alpha}(x-y)<\infty.
\end{align}
\end{enumerate}
Then,
\begin{align*}
\mylim{\tgo}{\frac{t^{d\gamma+\beta}\lone{p_1^{(\alpha)}}\ld- \Halp}{\phi_{\alpha}(t)}}=\int_{\Omega}dx\int_{\Omega^c}dy\, J_{\alpha}(x-y).
\end{align*}
\end{thm}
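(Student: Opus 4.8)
The plan is to rewrite the quantity $t^{d\gamma+\beta}\lone{p_1^{(\alpha)}}\ld - \Halp$ in a form that isolates an integral over $\Omega \times \Omega^c$ and then pass to the limit using the hypotheses \eqref{bc}, \eqref{lc}, \eqref{fi1} together with dominated convergence. First I would note that by the scaling identity \eqref{scaling} we have $t^{d\gamma+\beta}\lone{p_1^{(\alpha)}} = \lone{\palp}$, and since $\palp \in L^1(\Rd)$ is nonnegative with $\iord \palp(y)\,dy = \lone{\palp}$, Fubini gives
\begin{align*}
\lone{\palp}\ld - \Halp = \intd{\dom}dx\intd{\Rd}dy\,\palp(x-y) - \intd{\dom}dx\intd{\dom}dy\,\palp(x-y) = \intd{\dom}dx\intd{\dom^c}dy\,\palp(x-y).
\end{align*}
Dividing by $\phi_\alpha(t)$, the problem reduces to showing
\[
\mylim{\tgo}{\intd{\dom}dx\intd{\dom^c}dy\,\frac{\palp(x-y)}{\phi_\alpha(t)}} = \intd{\dom}dx\intd{\dom^c}dy\,J_\alpha(x-y).
\]

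Next I would justify the interchange of limit and integral. For fixed $x \in \Omega$ and $y \in \Omega^c$ we have $x - y \neq 0$ for a.e.\ pair (the diagonal has measure zero in $\Omega \times \Omega^c$ anyway since $\Omega^c$ is open), so \eqref{lc} gives pointwise convergence of the integrand to $J_\alpha(x-y)$ as $\tgo$. The domination hypothesis \eqref{bc} gives, for $0 < t \leq 1$, the bound $\palp(x-y)/\phi_\alpha(t) \leq \Lambda_\alpha(x-y)$, and the second integrability condition in \eqref{fi1} says precisely that $\Lambda_\alpha(x-y)$ is integrable over $\Omega \times \Omega^c$. Hence the dominated convergence theorem applies and yields the claimed limit, with the right-hand side finite by the first condition in \eqref{fi1}.

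The main subtlety — and the only place one must be slightly careful — is that the domination and convergence hypotheses are stated pointwise for $x \neq 0$ rather than for pairs $(x,y)$, so one should observe that replacing the single variable by $x - y$ is harmless: for a.e.\ $(x,y) \in \Omega \times \Omega^c$ with respect to the product Lebesgue measure the point $x-y$ is a legitimate argument (one can, if desired, restrict to the set where $x \neq y$, whose complement is null), and boundedness of $\Omega$ guarantees the product measure on $\Omega \times \Omega^c$ is $\sigma$-finite so Fubini and dominated convergence are available without difficulty. I would also remark that the identity in part (i) and the present statement are consistent when both apply, but no such check is logically needed. This completes the argument; the only real work is the bookkeeping of the scaling identity and verifying the hypotheses of dominated convergence, both of which are routine given \eqref{bc}--\eqref{fi1}.
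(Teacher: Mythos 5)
Your argument for part (iii) is correct and matches the paper's approach in all essentials. The paper routes the decomposition through the set covariance function $\cvf$ (writing $\cvf(y)=\ld-h_\dom(y)$ and then applying Fubini), whereas you obtain the identity $\lone{\palp}\ld-\Halp=\int_\dom dx\int_{\dom^c}dy\,\palp(x-y)$ directly from translation invariance; these are the same computation in different clothing, and your shortcut is perfectly fine. One small remark: you worry about the diagonal $x=y$ having measure zero, but in fact $\Omega\times\Omega^c$ is entirely disjoint from the diagonal since $\Omega$ and $\Omega^c$ are disjoint sets, so $x-y\neq 0$ for \emph{every} pair, not merely a.e.\ pair. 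The invocation of dominated convergence using \eqref{bc}, \eqref{lc}, \eqref{fi1} is exactly what the paper does.

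The serious problem is that you have only proved part (iii). The theorem has three parts, and parts (i) and (ii) are untouched in your proposal. These parts cannot be handled by the elementary Fubini decomposition you use, because they require the machinery of the set covariance function and its connection to perimeter. Concretely, part (i) rests on rewriting $\lone{p_1^{(\alpha)}}\ld-t^{-(\beta+d\gamma)}\Halp$ as $\iord dw\,p_1^{(\alpha)}(w)\bigl(\cvf(0)-\cvf(t^\gamma w)\bigr)$, passing to polar coordinates, and then using the facts (Propositions \ref{derlip} and \ref{fpg}) that the directional difference quotients $\bigl(\cvf(0)-\cvf(t^\gamma r u)\bigr)/(t^\gamma r)$ are bounded by $\tfrac12 V_u(\Omega)$, converge as $\tgo$ to $-\bigl(g_\Omega^u\bigr)'(0+)$, and integrate over $S^{d-1}$ to give $w_{d-1}\per$ via \eqref{perident}; this produces both the inequality \eqref{i1} and the limit \eqref{lim1}. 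Part (ii) is yet another argument: since $\abs{\cdot}p_1^{(\alpha)}(\cdot)\notin L^1(\Rd)$ when $d-nm=-1$, the approach of part (i) breaks down, and one must instead split the integral at the scale $\lo t^{-\gamma}$ using the compact support of $\cvf$ (Proposition \ref{gprop}(d)), estimating the tail term $F_1(t)$ and the near term $F_2(t)$ separately to extract the $t^\gamma\ln(1/t)$ rate. None of this appears in your proposal, so as a proof of Theorem \ref{mthm} it is incomplete.
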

We remark that the different small time behaviors provided in the foregoing theorem implicitly contains the fact that $I_0,I_1$ and $I_2$ are assumed to be disjoint subsets of $I$ and at least one of them is not empty.

The key step to proving Theorem \ref{mthm} consists on expressing the heat content $\Halp$ in terms of the set covariance function of $\dom$ defined in \eqref{gdef} below which as we shall see in the next section is linked to the perimeter of $\dom$.

A classical example of heat kernels satisfying all the above assumptions
are the transition probabilities corresponding to the rotational invariant $\alpha$--stable process whose main properties will be described in
\S \ref{sec:stable} below. There is an increasing interest in investigating 
small and large time behavior of functions related to the transition densities of a stable process and we refer the reader to  \cite{Acu, Acu0,Acu3, BanKul, BanKulSiu, BanYil, van5} for recent developments concerning the heat trace, heat content and spectral heat content for 
bounded domains with smooth boundary and Schr\"{o}dinger operators on $\R^d$. 
 
The paper is organized as follows.  In \S \ref{sec:prelim}, we introduce the geometric objects associated with sets $\dom$ of finite perimeter. Namely, set covariance and bounded variation functions. In \S \ref{sec:proof}, we provide the proof of Theorem \ref{mthm}. In \S \ref{sec:stable}, an application of Theorem \ref{mthm} is given when working with the heat kernels of a rotational invariant $\alpha$-stable process. Finally, in \S \ref{sec:poisson}, the heat content over the unit ball related to the Poisson kernel (see \eqref{Cauchyk} below) is investigated.

\section{preliminaries: functions of bounded variation, perimeter and covariance function.}\label{sec:prelim}
In this section, we introduce a couple of  geometric objects associated with the set $\dom$ under consideration  which will play an important role in the proof of
Theorem \ref{mthm}. The interested reader may consult \cite{Evans}, \cite{Lau}, \cite{Mor}  and \cite{Mir}  for further details on the matter and for the proofs of the many results to be given in this section.

\begin{definition}
Let $G\subseteq \Rd$ be an  open set and $f:G\rightarrow \R$, $f\in L^{1}(G)$. The total variation of $f$ in $G$ is defined by
$$V(f,G)=\sup\set{\intd{G}dx\,f(x) {\bf div} \varphi(x): \varphi\in C^{1}_{c}(G,\Rd), ||\varphi||_{\infty}\leq 1}.$$
We set $BV(G)=\set{f\in L^{1}(G):V(f,G)<\infty}$ to denote the set of functions of bounded variation.

The directional derivative of $f$ in $G$ in the direction $u\in S^{d-1}$ is
$$V_{u}(f,G)=\sup\set{\intd{G}dx\,f(x) \langle\nabla \varphi(x),u\rangle: \varphi\in C^{1}_{c}(G,\Rd), ||\varphi||_{\infty}\leq1}.$$

If $\dom \in \lm$, we call $V(\indf{\dom}, \Rd)$ the perimeter of $\dom$ and we denote this  quantity by $Per(\dom)$. In addition, $V_{u}(\dom)$ will denote for simplicity the quantity $V_u(\indf{\dom}, \Rd)$.
\end{definition}

In order to gain an insight into  functions of bounded variation, we proceed to provide some  classical examples.
\begin{example}
Let $G\subseteq\Rd$ be an open set and consider 
$$W^{1,1}(G)=\set{f\in L^{1}(G): 
\frac{\partial f}{\partial x_ j}\in L^{1}(G)}.$$
Then $W^{1,1}(G)\subseteq BV(G)$.  To prove  this, it suffices to see that by integration by parts formula, we have for any $\varphi\in C^{1}_{c}(G,\Rd)$ with $ ||\varphi||_{\infty}\leq 1$ that
$$\intd{G}dxf(x){\bf div} \varphi(x)=-\intd{G}dx\langle \nabla f(x), \varphi(x) \rangle\leq \intd{G} dx\abs{\nabla f(x)}<\infty.$$
Thus, 
\begin{align}\label{eq1}
V(f,G)\leq \intd{G} dx\abs{\nabla f(x)}.
\end{align}
It is worth mentioning  that equality  indeed happens in \eqref{eq1} and we refer the reader to \cite{Mor} for the proof.
\end{example}

The following example tells us that the perimeter of a bounded set and  the $(d-1)$-Hausdorff measure  of the boundary are related provided that the boundary is smooth enough. 

\begin{example}\label{Smoothbd}
Let $\dom \in \lm$ be a bounded open set  with $C^2$ boundary  $\bd$ and $\mathcal{H}^{d-1}(\bd)<\infty$. Then, we claim that
$\indf{\dom}\in BV(\dom)$ and 
$$\per=V(\indf{\dom},\Rd)=\mathcal{H}^{d-1}(\bd).$$
To see this, we apply the divergence theorem to any $\varphi\in C^{1}_{c}(\Rd,\Rd)$ with $ ||\varphi||_{\infty}\leq 1$ to obtain
\begin{align}\label{diveq}
\intd{\dom}dx\,{\bf div} \varphi(x)=\intd{\bd}\langle \varphi(x),n(x)\rangle \mathcal{H}^{d-1}(dx),
\end{align}
where $n(x)$ is the unit normal along $\bd$.  The last identity implies $\per\leq \mathcal{H}^{d-1}(\bd)$ since 
$\langle \varphi(x),n(x)\rangle \leq \abs{\varphi(x)}\abs{n(x)}\leq 1.$

On the other hand, the facts that $\dom$ is bounded  and $\bd$ is $C^2$ allow us   to construct  a compact set $K$  such that $\dom \subset K$ and a vector field $\varphi_{\dom}\in C^{1}_{c}(\Rd,\Rd)$ with $ ||\varphi_{\dom}||_{\infty}\leq 1$ satisfying $\varphi_{\dom}(x)=n(x)$ for $x\in \bd$ 
and $\varphi_{\dom}(x)=0$ for $x\in K^c$. Hence, an application of
\eqref{diveq} yields
$$\intd{\dom}dx\,{\bf div}\varphi_{\dom}(x)=\intd{\bd}\langle n(x),n(x)\rangle \mathcal{H}^{d-1}(dx)= \mathcal{H}^{d-1}(\bd),$$
which in turn finishes the proof of our claim.
\end{example}

The following geometric object will be essential in order to investigate the small time behavior of the heat content $\Halp$.
\begin{definition}[\bf Set covariance function]
Let $\dom\in\lm$ have finite Lebesgue measure. The covariance function of $\dom$ is denoted by $\cvf$ and defined for each $y\in \Rd$ by
\begin{align}\label{gdef}
\cvf(y)=\abs{\dom \cap \pthesis{\dom+y}}=\iord dx\,\indf{\dom}(x)\,\indf{\dom}(x-y).
\end{align}
\end{definition}
We now proceed  to mention some analytic properties associated with the set covariance function $\cvf$.
\begin{prop}\label{gprop}
Let $\dom \subset \Rd$ be a Lebesgue measurable set with $\ld<\infty$ and $\cvf$ its corresponding covariance function. Then,
\begin{enumerate}
\item[$a)$] For all $y\in \Rd$, $0\leq \cvf(y)\leq \cvf(0)=\ld$.
\item[$b)$] For all $y\in \Rd$, $ \cvf(y)= \cvf(-y)$.
\item[$c)$] $\iord dy\,\cvf(y)=\ld^2$.
\item[$d)$] $\cvf$ is compactly supported. In fact,  for all $y \in \Rd$ with $\abs{y}\geq \lo=\sup\set{\abs{x-y}:x,y\in \dom},$ we have
$\cvf(y)=0$.
\item[$e)$] $\cvf$ is uniformly continuous over $\Rd$ and $\mylim{\abs{y}\rightarrow \infty}{\cvf(y)}=0$.
\end{enumerate}
\end{prop}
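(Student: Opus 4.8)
The plan is to obtain all five items by unwinding the definition $\cvf(y)=\iord dx\,\indf{\dom}(x)\,\indf{\dom}(x-y)$ and invoking Tonelli--Fubini together with the translation invariance of Lebesgue measure. For $a)$ I would simply note that the integrand is nonnegative and pointwise $\le\indf{\dom}(x)$, so integration gives $0\le\cvf(y)\le\iord dx\,\indf{\dom}(x)=\ld$, while $\cvf(0)=\abs{\dom\cap\dom}=\ld$. For $b)$, the change of variables $x\mapsto x+y$ transforms $\iord dx\,\indf{\dom}(x)\indf{\dom}(x-y)$ into $\iord dx\,\indf{\dom}(x+y)\indf{\dom}(x)$, which is exactly $\cvf(-y)$. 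For $c)$, Tonelli's theorem gives $\iord dy\,\cvf(y)=\iord dx\,\indf{\dom}(x)\bigl(\iord dy\,\indf{\dom}(x-y)\bigr)$, and the inner integral equals $\ld$ for every $x$, so the whole expression is $\ld^2$; equivalently one recognizes $\cvf=\indf{\dom}*\indf{-\dom}$ and computes $\lone{\cvf}=\lone{\indf{\dom}}\,\lone{\indf{-\dom}}=\ld^2$ directly.

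For $d)$, I would argue by contradiction. If $\cvf(y)>0$ then $A:=\dom\cap(\dom+y)$ has positive Lebesgue measure, hence contains a Lebesgue density point $z$; since $z\in\dom$ and $z-y\in\dom$ one already gets $\abs{y}=\abs{z-(z-y)}\le\lo$. To exclude the borderline case $\abs{y}=\lo$, I would use that near $z$ the set $A$ fills almost all of a small ball, so it meets the half-ball $\set{z+\eta:\langle\eta,y\rangle\ge0,\,0<\abs{\eta}<r}$ for $r$ small enough; picking such a point $z+\eta\in A\subseteq\dom$ and recalling $z-y\in\dom$ yields $\abs{(z+\eta)-(z-y)}=\abs{y+\eta}>\abs{y}$, contradicting the definition of $\lo$ unless $\abs{y}<\lo$. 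Thus $\cvf(y)=0$ for all $\abs{y}\ge\lo$.

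For $e)$, the uniform continuity comes from the bound
\begin{align*}
\abs{\cvf(y)-\cvf(y')}\le\iord dx\,\abs{\indf{\dom}(x-y)-\indf{\dom}(x-y')}=\lone{\indf{\dom}(\cdot-(y-y'))-\indf{\dom}(\cdot)},
\end{align*}
combined with the continuity of translation in $L^1(\Rd)$; since the right-hand side depends on $y$ and $y'$ only through $y-y'$, the resulting modulus of continuity is uniform in $y$. Finally, $\mylim{\abs{y}\to\infty}{\cvf(y)}=0$ is immediate from $d)$, as $\cvf$ is supported in the ball $B_{\lo}(0)$.

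All of the above is routine; the only point requiring genuine care is item $d)$ at $\abs{y}=\lo$, where the mere nonemptiness of $\dom\cap(\dom+y)$ is not enough and one must exploit that this set has positive measure — alternatively one may first establish $e)$ and then note that $\cvf$ is strictly positive on a whole neighborhood of any point where it does not vanish, forcing it to vanish outside $B_{\lo}(0)$. I expect this to be the only subtle step.
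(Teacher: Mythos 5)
The paper itself does not prove Proposition~\ref{gprop}: at the start of \S 2 it delegates this (and the other preliminaries on covariance functions) to the cited references. Your write-up is a correct self-contained proof, so there is no in-paper argument to compare it against, but it is worth checking carefully. Parts $a)$, $b)$, $c)$ follow exactly as you say from the nonnegativity of the integrand, the substitution $x\mapsto x+y$, and Tonelli together with translation invariance of Lebesgue measure. Part $e)$ is also correct: the bound $\abs{\cvf(y)-\cvf(y')}\leq \lone{\indf{\dom}(\cdot-(y-y'))-\indf{\dom}}$ depends on $y,y'$ only through $y-y'$, and continuity of translation in $L^1$ gives a uniform modulus; the decay at infinity is then immediate from $d)$.

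Your identification of the only delicate point is accurate: for $\abs{y}>\lo$ the intersection $\dom\cap(\dom+y)$ is outright empty, but at $\abs{y}=\lo$ nonemptiness of the intersection does not contradict anything, and one must use that a set of positive measure contains a density point. Your density argument is sound: choosing $z\in A=\dom\cap(\dom+y)$ a density point of $A$, the half-ball $\set{z+\eta:\langle\eta,y\rangle\ge 0,\,0<\abs{\eta}<r}$ has measure $\tfrac12\abs{B_r(z)}$, so for $r$ small enough $A$ meets it, and any such $z+\eta\in\dom$ together with $z-y\in\dom$ are at distance $\abs{y+\eta}>\abs{y}$, forcing $\abs{y}<\lo$. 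The alternative you sketch (prove the continuity in $e)$ first, observe $\cvf=0$ on the open set $\set{\abs{y}>\lo}$, and pass to the closure) is an equally valid and slightly slicker route. One minor remark not affecting your argument: as stated the proposition assumes only $\ld<\infty$, but the first sentence of $d)$ (``$\cvf$ is compactly supported'') genuinely requires $\dom$ bounded, i.e.\ $\lo<\infty$; this is the standing hypothesis in the paper, so no gap, but the claim would be false for an unbounded set of finite measure.
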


The following propositions reveal the link among functions of bounded variation, directional variation and sets of finite perimeter.  The proof of these results can be found in \cite{Galerne}.

\begin{prop}Let $G$ be an open subset of $\Rd$ and consider $f\in L^{1}(\Rd)$. Then, $V(f,G)$ is finite if and only if  the directional variation $V_{u}(f,G)$ is finite for every direction $u\in S^{d-1}$ and
\begin{align}\label{relation}
V(f,G)=\frac{1}{2w_{d-1}}\intd{S^{d-1}}\mathcal{H}^{d-1}(du)\,V_{u}(f,G).
\end{align}
In particular, for any $\dom\in \lm$  with finite perimeter, we have
\begin{align}\label{perident}
\per=\frac{1}{2w_{d-1}}\intd{S^{d-1}}\mathcal{H}^{d-1}(du)\,V_{u}(\dom).
\end{align}
\end{prop}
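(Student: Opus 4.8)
The plan is to establish the scalar identity \eqref{relation} and then read off \eqref{perident} by taking $f=\indf{\dom}$. First I would record the structural fact that, whenever $V(f,G)<\infty$, the linear functional $\varphi\mapsto-\intd{G}dx\,f(x)\,{\bf div}\,\varphi(x)$ is bounded on $C^{1}_{c}(G,\Rd)$ with norm $V(f,G)$, so by the Riesz representation theorem (see \cite{Evans}) it is induced by a finite $\Rd$--valued Radon measure $Df$ on $G$ carrying a polar decomposition $Df=\nu\,\abs{Df}$, where $\abs{\nu(x)}=1$ for $\abs{Df}$--a.e.\ $x$ and $\abs{Df}(G)=V(f,G)$. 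For a fixed direction $u\in S^{d-1}$ the directional derivative $D_{u}f=\langle u,Df\rangle$ then has $\abs{Df}$--density $\langle u,\nu(x)\rangle$, so that $V_{u}(f,G)=\abs{D_{u}f}(G)=\intd{G}d\abs{Df}(x)\,\abs{\langle u,\nu(x)\rangle}$.

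The second ingredient I would use is the elementary spherical average
\[
\intd{S^{d-1}}\mathcal{H}^{d-1}(du)\,\abs{\langle v,u\rangle}=2w_{d-1}\,\abs{v},\qquad v\in\Rd .
\]
By rotational invariance of $\mathcal{H}^{d-1}$ on $S^{d-1}$ it is enough to treat $v=e_{d}$; slicing the sphere by its last coordinate, with $\mathcal{H}^{d-1}(du)=(1-s^{2})^{\frac{d-3}{2}}\,ds\,\mathcal{H}^{d-2}(d\omega)$ for $\omega\in S^{d-2}$ and $s\in[-1,1]$, reduces the integral to $\mathcal{H}^{d-2}(S^{d-2})\cdot 2\int_{0}^{1}s\,(1-s^{2})^{\frac{d-3}{2}}\,ds$; plugging in $\mathcal{H}^{d-2}(S^{d-2})=(d-1)w_{d-1}$ and $\int_{0}^{1}s\,(1-s^{2})^{\frac{d-3}{2}}\,ds=\frac{1}{d-1}$, and using $(d-1)\Gamma(\tfrac{d-1}{2})=2\Gamma(\tfrac{d+1}{2})$, produces exactly the constant $2w_{d-1}$.

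Combining the two, I would apply Tonelli's theorem to the nonnegative integrand to obtain
\[
\intd{S^{d-1}}\mathcal{H}^{d-1}(du)\,V_{u}(f,G)=\intd{G}d\abs{Df}(x)\intd{S^{d-1}}\mathcal{H}^{d-1}(du)\,\abs{\langle u,\nu(x)\rangle}=2w_{d-1}\intd{G}d\abs{Df}(x)\,\abs{\nu(x)}=2w_{d-1}\,V(f,G),
\]
which is \eqref{relation}. The finiteness equivalence then follows at once: if $V(f,G)<\infty$, then $\abs{\langle u,\nu\rangle}\le1$ forces $V_{u}(f,G)\le\abs{Df}(G)=V(f,G)<\infty$ for every $u$; conversely, if $V_{u}(f,G)<\infty$ for all $u\in S^{d-1}$, then in particular the coordinate derivatives satisfy $\abs{D_{e_{j}}f}(G)=V_{e_{j}}(f,G)<\infty$ for $j=1,\dots,d$, hence $Df=(D_{e_{1}}f,\dots,D_{e_{d}}f)$ is a finite vector measure and $V(f,G)=\abs{Df}(G)\le\sum_{j=1}^{d}\abs{D_{e_{j}}f}(G)<\infty$, so that the displayed identity applies. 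Finally, \eqref{perident} is the special case $f=\indf{\dom}\in L^{1}(\Rd)$ (legitimate because $\ld<\infty$ since $\dom$ is bounded) and $G=\Rd$, on recalling that $V(\indf{\dom},\Rd)=\per$ and $V_{u}(\indf{\dom},\Rd)=V_{u}(\dom)$ by definition.

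The argument is conceptually short, so the main obstacle is really bookkeeping: pinning down the dimensional constant in the spherical average so that it comes out \emph{exactly} as $2w_{d-1}$ (the Beta integral together with the $\Gamma$--identity above). A secondary technical point is the passage to a general open $G$: I would obtain $Df$ and its polar form directly from Riesz representation on $G$, which sidesteps the boundary issues one would meet if one instead mollified $f$ on $\Rd$ and passed to the limit --- that approximation route works too, via lower semicontinuity of $V(\cdot,G)$ and $V_{u}(\cdot,G)$ under $L^{1}$ convergence combined with the smooth-function identity $\intd{S^{d-1}}\mathcal{H}^{d-1}(du)\,\abs{\langle\nabla g(x),u\rangle}=2w_{d-1}\abs{\nabla g(x)}$.
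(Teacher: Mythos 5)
Your proof is correct, and it follows the standard route: the polar decomposition $Df=\nu\,\abs{Df}$ from the Riesz/structure theorem, the identification $V_{u}(f,G)=\int_{G}\abs{\langle u,\nu\rangle}\,d\abs{Df}$, the spherical average $\int_{S^{d-1}}\abs{\langle v,u\rangle}\,\mathcal{H}^{d-1}(du)=2w_{d-1}\abs{v}$, and Tonelli, with the finiteness equivalence read off from the coordinate directions. The paper itself does not prove this proposition but cites Galerne \cite{Galerne}, whose argument is essentially the one you give; the only cosmetic remark is that once you plug in $\mathcal{H}^{d-2}(S^{d-2})=(d-1)w_{d-1}$ and $\int_{0}^{1}s(1-s^{2})^{(d-3)/2}ds=\tfrac{1}{d-1}$, the $(d-1)$'s cancel directly and the auxiliary $\Gamma$--identity you invoke is not actually needed.
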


\begin{prop}\label{derlip} Let $\dom \in \lm$ be such that $\ld$ is finite and consider $\cvf$ its corresponding covariance function and $u\in S^{d-1}$.  The following assertions are equivalent.
\begin{enumerate}
\item[$(i)$] $V_{u}(\dom)$ is finite.
\item[$(ii)$] $
\mylim{r\rightarrow 0}{\frac{\cvf(0)-\cvf(ru)}{\abs{r}}}
$ exists and is finite.
\item[$(iii)$] The real valued function $\cvf^{u}(r)=\cvf(ru)$ is Lipschitz.
Moreover,
\begin{align*}
Lip(\cvf^{u})=\mylim{r\rightarrow 0}{\frac{\cvf(0)-\cvf(ru)}{\abs{r}}}=\frac{V_u(\Omega)}{2}.
\end{align*}
\end{enumerate}

\end{prop}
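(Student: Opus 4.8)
The plan is to reduce all three assertions to a single quantity: the slope at the origin of the $L^{1}$ modulus of continuity of $\indf{\dom}$ in the direction $u$. First I would record the elementary identity
\begin{align*}
\cvf(0)-\cvf(ru)=\abs{\dom\setminus(\dom+ru)}=\tfrac12\lone{\indf{\dom}-\indf{\dom+ru}}=\tfrac12\,\omega(r),
\end{align*}
where $\omega(r):=\iord dx\,\abs{\indf{\dom}(x)-\indf{\dom}(x-ru)}$; this follows from $\ld=\abs{\dom\cap(\dom+ru)}+\abs{\dom\setminus(\dom+ru)}$, the translation invariance of Lebesgue measure (which also gives $\abs{\dom\setminus(\dom+ru)}=\abs{(\dom+ru)\setminus\dom}$), and $\indf{\dom}(x-ru)=\indf{\dom+ru}(x)$. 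A change of variables shows that $\omega$ is even, and the triangle inequality gives $\omega(r+s)\le\omega(r)+\omega(s)$ for $r,s\ge 0$; since then $\omega(r)\le n\,\omega(r/n)$ for every $n$, a standard subadditivity argument shows that $\ell:=\mylim{r\rightarrow0}{\tfrac{\omega(r)}{\abs{r}}}$ exists in $[0,\infty]$ and equals $\sup_{r\neq0}\tfrac{\omega(r)}{\abs{r}}$. In particular $\mylim{r\rightarrow0}{\tfrac{\cvf(0)-\cvf(ru)}{\abs{r}}}=\tfrac{\ell}{2}$ always exists in $[0,\infty]$, so $(ii)$ holds precisely when $\ell<\infty$.

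The heart of the argument is the identity $\ell=V_u(\dom)$. Here I use that, by Riesz representation, $V_u(\dom)<\infty$ holds exactly when the distributional directional derivative $\partial_u\indf{\dom}$ is a finite signed measure, whose total variation is then $V_u(\dom)$. For $V_u(\dom)\le\ell$: given a test function $\varphi$ with $||\varphi||_{\infty}\le 1$, write $\iord dx\,\indf{\dom}(x)\,\partial_u\varphi(x)$ as the limit as $r\rightarrow0$ of $r^{-1}\iord dx\,(\indf{\dom}(x-ru)-\indf{\dom}(x))\varphi(x)$ (after the substitution $x\mapsto x-ru$), bound each such integral by $\abs{r}^{-1}\omega(r)$, let $r\rightarrow0$, and take the supremum over $\varphi$. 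For the reverse inequality $\ell\le V_u(\dom)$, which is only nontrivial when $V_u(\dom)<\infty$, I would mollify: for $f_{\varepsilon}=\indf{\dom}\ast\rho_{\varepsilon}$ one has $f_{\varepsilon}$ smooth with $\partial_u f_{\varepsilon}=(\partial_u\indf{\dom})\ast\rho_{\varepsilon}$, whence $\lone{\partial_u f_{\varepsilon}}\le V_u(\dom)$; the fundamental theorem of calculus along the lines $x+tu$ gives $\lone{f_{\varepsilon}(\cdot-ru)-f_{\varepsilon}}\le\abs{r}\,\lone{\partial_u f_{\varepsilon}}\le\abs{r}\,V_u(\dom)$, and letting $\varepsilon\rightarrow0$ (so that $f_{\varepsilon}\rightarrow\indf{\dom}$ in $L^{1}$) yields $\omega(r)\le\abs{r}\,V_u(\dom)$ for all $r$, i.e.\ $\ell\le V_u(\dom)$. (Alternatively one could invoke the one-dimensional slicing description of directional variation from \cite{Galerne}.)

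Granting $\ell=V_u(\dom)$, the rest is bookkeeping. Since $(ii)\Leftrightarrow\ell<\infty\Leftrightarrow V_u(\dom)<\infty\Leftrightarrow(i)$, and $(iii)\Rightarrow(ii)$ is immediate (Lipschitz continuity of $\cvf^{u}$ keeps the difference quotients at $0$ bounded, and their limit already exists by the first paragraph), only $(i)\Rightarrow(iii)$ together with the stated value of $Lip(\cvf^{u})$ remains. Writing $\cvf^{u}(r)-\cvf^{u}(s)=\tfrac12(\lone{\indf{\dom}-\indf{\dom+su}}-\lone{\indf{\dom}-\indf{\dom+ru}})$ and applying the reverse triangle inequality followed by a translation gives $\abs{\cvf^{u}(r)-\cvf^{u}(s)}\le\tfrac12\,\omega(r-s)\le\tfrac12\abs{r-s}\,V_u(\dom)$, so $\cvf^{u}$ is Lipschitz with $Lip(\cvf^{u})\le\tfrac{V_u(\dom)}{2}$. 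Conversely, by Proposition \ref{gprop}$(a)$ the maximum of $\cvf^{u}$ is $\cvf^{u}(0)=\ld$, hence $Lip(\cvf^{u})\ge\sup_{r\neq0}\tfrac{\cvf(0)-\cvf(ru)}{\abs{r}}=\tfrac{\ell}{2}=\tfrac{V_u(\dom)}{2}$; therefore $Lip(\cvf^{u})=\tfrac{V_u(\dom)}{2}$, and by the first paragraph this common value is also the limit appearing in $(ii)$.

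The step I expect to be the main obstacle is the identity $\ell=V_u(\dom)$, and within it the inequality $\ell\le V_u(\dom)$: this is the only place where genuine $BV$ theory is needed, and one must carefully justify that mollification does not increase the $L^{1}$ norm of the directional derivative and that the smooth bound $\lone{f_{\varepsilon}(\cdot-ru)-f_{\varepsilon}}\le\abs{r}\,\lone{\partial_u f_{\varepsilon}}$ survives the passage to the $L^{1}$ limit. Everything else reduces to elementary manipulations of the covariance function and Proposition \ref{gprop}.
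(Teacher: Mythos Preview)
The paper does not give its own proof of this proposition: it states the result in the preliminaries and refers the reader to \cite{Galerne} for the argument. So there is no in-paper proof to compare against.

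That said, your proposal is correct and is essentially the argument one finds in the cited source. The reduction $\cvf(0)-\cvf(ru)=\tfrac12\,\omega(r)$ with $\omega$ the $L^{1}$ modulus of translation of $\indf{\dom}$ is exactly the right starting point; subadditivity of $\omega$ (together with the continuity $\omega(r)\to0$ as $r\to0$, which holds because $\indf{\dom}\in L^{1}(\Rd)$ and translations act continuously on $L^{1}$) yields that $\ell=\mylim{r\to0}{\omega(r)/\abs{r}}$ exists in $[0,\infty]$ and equals $\sup_{r\neq0}\omega(r)/\abs{r}$. The identification $\ell=V_u(\dom)$ via the duality inequality $V_u(\dom)\le\ell$ and the mollification inequality $\ell\le V_u(\dom)$ is the standard route, and your Lipschitz computation from the reverse triangle inequality is clean. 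The only spot worth a one-line remark in a finished write-up is the continuity of $\omega$ at $0$, which you implicitly use in the subadditivity step to upgrade ``$\omega(r)\le n\,\omega(r/n)$'' to existence of the full limit; you have all the ingredients, just make that explicit.
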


\begin{prop}\label{fpg}
Let $\dom \in \lm$ be a bounded set with $\per<\infty$ and consider $\cvf$ its corresponding covariance function. Then,
\begin{enumerate}
\item[i)]$\cvf$ is Lipschitz with
$$Lip(\cvf)=\frac{1}{2}\sup_{u\in S^{d-1}}{V_u(\dom) }\leq \frac{1}{2}Per(\dom).$$
\item[ii)] For each $u\in S^{d-1}$ and $r>0$, 
$$\pthesis{g_{\dom}^{u}}'(0+)=\mylim{r\rightarrow 0+}{\frac{\cvf(ru)-\cvf(0)}{r}}$$ exists and is finite. Moreover,
\begin{align*}
Per(\dom)=-\frac{1}{w_{d-1}}\int_{S^{d-1}}\mathcal{H}^{d-1}(du)\,\pthesis{g_{\dom}^{u}}'(0+).
\end{align*}
\end{enumerate}
\end{prop}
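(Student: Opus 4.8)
The plan is to deduce both assertions from the one-dimensional information recorded in Proposition \ref{derlip}. Since $\per<\infty$, the proposition containing \eqref{relation} guarantees $V_u(\dom)<\infty$ for \emph{every} $u\in S^{d-1}$, so Proposition \ref{derlip} applies in each direction: writing $\cvf^{u}(r)=\cvf(ru)$, the function $\cvf^{u}$ is Lipschitz with $Lip(\cvf^{u})=\tfrac12 V_u(\dom)$, and combining part $(ii)$ of that proposition with its ``moreover'' clause we get that $\pthesis{g_{\dom}^{u}}'(0+)=\lim_{r\to0+}\frac{\cvf(ru)-\cvf(0)}{r}$ exists, is finite, and equals $-\tfrac12 V_u(\dom)$. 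This already proves part $(ii)$: integrating this identity over $S^{d-1}$ and invoking \eqref{perident},
\begin{align*}
-\frac{1}{w_{d-1}}\int_{S^{d-1}}\mathcal{H}^{d-1}(du)\,\pthesis{g_{\dom}^{u}}'(0+)=\frac{1}{2w_{d-1}}\int_{S^{d-1}}\mathcal{H}^{d-1}(du)\,V_u(\dom)=\per.
\end{align*}

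For part $(i)$, the bound $Lip(\cvf)\geq\tfrac12\sup_{u}V_u(\dom)$ is immediate, since restricting $\cvf$ to the line $\R u$ can only decrease the Lipschitz constant, giving $Lip(\cvf)\geq Lip(\cvf^{u})=\tfrac12 V_u(\dom)$ for every $u$. The substance of the proof is the reverse inequality, and the step I would isolate is the purely set-theoretic estimate
\begin{align*}
\abs{\cvf(x)-\cvf(y)}\leq \cvf(0)-\cvf(y-x),\qquad x,y\in\Rd.
\end{align*}
To prove it, write $\cvf(x)-\cvf(y)=\abs{\dom\cap(\dom+x)\setminus(\dom+y)}-\abs{\dom\cap(\dom+y)\setminus(\dom+x)}$, bound the first term by $\abs{(\dom+x)\setminus(\dom+y)}=\abs{\dom\setminus(\dom+(y-x))}=\cvf(0)-\cvf(y-x)$ and, symmetrically, the second by $\cvf(0)-\cvf(x-y)$, then identify the two right-hand sides using that $\cvf$ is even (Proposition \ref{gprop}$\,b)$). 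Granting this, for $x\neq y$ set $u=(y-x)/\abs{y-x}$; since $\cvf(y-x)=\cvf^{u}(\abs{y-x})$ and $\cvf(0)\geq\cvf(y-x)$ (Proposition \ref{gprop}$\,a)$), Proposition \ref{derlip}$(iii)$ gives
\begin{align*}
\abs{\cvf(x)-\cvf(y)}\leq \cvf(0)-\cvf^{u}(\abs{y-x})\leq Lip(\cvf^{u})\,\abs{y-x}=\tfrac12 V_u(\dom)\,\abs{y-x}\leq \tfrac12\pthesis{\sup_{v}V_v(\dom)}\abs{y-x},
\end{align*}
so $Lip(\cvf)\leq\tfrac12\sup_{v}V_v(\dom)$, and equality holds.

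It remains, for part $(i)$, to check $\sup_{u}V_u(\dom)\leq\per$. I would argue directly from the definitions: to a field $\varphi$ admissible in the supremum defining $V_u(\dom)$ one associates the vector field $\psi=\varphi\,u$, admissible for $\per$ because $\|\psi\|_{\infty}=\|\varphi\|_{\infty}\leq1$, which satisfies ${\bf div}\,\psi=\langle\nabla\varphi,u\rangle$; hence $\intd{\dom}dx\,\langle\nabla\varphi,u\rangle=\intd{\dom}dx\,{\bf div}\,\psi\leq\per$, and taking the supremum over $\varphi$ yields $V_u(\dom)\leq\per$ for every $u$. The one genuinely delicate point is the sharp constant in part $(i)$: routing the estimate through the full symmetric difference $\abs{(\dom+x)\triangle(\dom+y)}=2\pthesis{\cvf(0)-\cvf(y-x)}$ would introduce a spurious factor $2$, so it is essential to keep only the one-sided set difference above. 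All remaining verifications — existence and finiteness of the limits, and measurability in $u$ for the integral in $(ii)$ — are supplied by Proposition \ref{derlip} and the proposition containing \eqref{relation}.
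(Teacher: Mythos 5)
Your proof is correct. Note that the paper itself does not prove Proposition \ref{fpg} (or Proposition \ref{derlip}, or the proposition containing \eqref{relation}) — all three are cited to Galerne \cite{Galerne} — so you are supplying an argument the paper omits, and it is worth saying what it rests on. The core of your argument is the covariogram inequality $\abs{\cvf(x)-\cvf(y)}\le \cvf(0)-\cvf(y-x)$, which you derive correctly from $\abs{A}-\abs{B}=\abs{A\setminus B}-\abs{B\setminus A}$ together with translation invariance and the evenness of $\cvf$; this is exactly the device that gives the sharp constant $\tfrac12\sup_u V_u(\dom)$ rather than the factor $2$ one would lose by bounding through the symmetric difference. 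From there both directions of part $(i)$ follow as you say: the lower bound by restricting to lines, the upper bound by combining the covariogram inequality with $Lip(\cvf^{u})=\tfrac12 V_u(\dom)$ from Proposition \ref{derlip}$(iii)$. Part $(ii)$ is, as you observe, essentially a rereading of Proposition \ref{derlip} (using evenness of $\cvf$ to identify $(g_{\dom}^{u})'(0+)=-\tfrac12 V_u(\dom)$) followed by integration against \eqref{perident}. Your justification of $\sup_u V_u(\dom)\le\per$ via $\psi=\varphi\,u$ is also correct, with the tacit — and correct — reading that in the definition of $V_u$ the test function $\varphi$ is scalar-valued ($\varphi\in C^1_c(G)$), not $\Rd$-valued as the paper's typo suggests; with a vector-valued $\varphi$ the expression $\langle\nabla\varphi,u\rangle$ would not even be a scalar, so your interpretation is the only sensible one and matches the source \cite{Galerne}. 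The one hypothesis you use without comment is the measurability of $u\mapsto V_u(\dom)$ in order to integrate over $S^{d-1}$; you correctly flag that this is implicit in \eqref{relation}, which cannot be stated otherwise.
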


\section{proof of theorem \ref{mthm} }\label{sec:proof}
We begin this section by  rewriting $\Halp$ in terms of the set covariance function $\cvf$.  By appealing to Fubini's Theorem and performing a simple change of variable, we have  based on  \eqref{defhc} and \eqref{gdef} that
\begin{align*}
\Halp=\iord dx\iord dy \,\palp(x-y)\indf{\dom}(y)\indf{\dom}(x)=\iord dz\,
\palp(z)\,\cvf(z).
\end{align*}
By using the scaling property \eqref{sp} and the change of variable $w=t^{-\gamma}z$, we arrive at
\begin{align}\label{id1}
 t^{-(d\gamma+\beta)}\,\Halp&=\iord dw\,
p_1^{(\alpha)}(w)\cvf\pthesis{t^{\gamma}w}\\ \nonumber
&=
\cvf\pthesis{0}\lone{p_1^{(\alpha)}}+\iord dw\,
p_1^{(\alpha)}(w)\pthesis{\cvf\pthesis{t^{\gamma}w}-\cvf(0)}.
\end{align}
Hence, we have shown that
\begin{align}\label{id3}
\iord dw\,
p_1^{(\alpha)}(w)\pthesis{\cvf(0)-\cvf\pthesis{t^{\gamma}w}}=
\cvf\pthesis{0}\lone{p_1^{(\alpha)}}-t^{-(d\gamma+\beta)}\,\Halp,
\end{align}
where $\cvf(0)=\ld$ by Proposition \ref{gprop}.
Next, with the aid of \eqref{id3}, we start  the proof of Theorem \ref{mthm}.

\bigskip
{\bf Proof of part $(i)$ of of Theorem \ref{mthm}:}
Let us define
\begin{align}\label{Fdef}
F_{\dom}^{(\alpha)}(t)=\iord dw\,
p_1^{(\alpha)}(w)\pthesis{\cvf(0)-\cvf\pthesis{t^{\gamma}w}}.
\end{align}

Now, polar coordinates  and the fact that $\palp(x)=\palp(\abs{x}e_d)$ allow us to express $F_{\dom}^{(\alpha)}(t)$ as follows.
\begin{align}\label{Fident}
F_{\dom}^{(\alpha)}(t)&=\int_{0}^{\infty}dr\,r^{d-1}\,p_1^{(\alpha)}(r\,e_d)\intd{S^{d-1}}\mathcal{H}^{d-1}(du)\pthesis{\cvf\pthesis{0}-\cvf\pthesis{t^{\gamma}r\,u}}\nonumber\\
&=t^{\gamma}
\int_{0}^{\infty}dr\,r^{d}\,p_1^{(\alpha)}(r\,e_d)M_{\Omega}(t,r),
\end{align}
where  
\begin{align}\label{Mdef}
M_{\Omega}(t,r)=\intd{S^{d-1}}\mathcal{H}^{d-1}(du)\pthesis{\frac{\cvf\pthesis{0}-\cvf\pthesis{t^{\gamma}r\,u}}{t^{\gamma}\,r}}.
\end{align}

We first proceed to prove $\eqref{lim1}$.  Notice that by Proposition \ref{fpg}, we have
\begin{align}\label{ine1}
\abs{\frac{\cvf\pthesis{t^{\gamma}r\,u}-\cvf\pthesis{0}}{t^{\gamma}\,r}}\leq \frac{1}{2}\per\in L^{1}(S^{d-1}).
\end{align}
 Therefore,  it follows from the last inequality, part $(ii)$ of Proposition  \ref{fpg} and the Lebesgue Dominated convergence Theorem that
\begin{align}\label{limM}
\mylim{\tgo}{M_{\Omega}(t,r)}=w_{d-1}\per.
\end{align}

On the other hand, it follows from \eqref{Mdef} and \eqref{ine1} that
\begin{align}\label{ineq2}
r^{d}\,p_1^{(\alpha)}(r\,e_d)M_{\Omega}(t,r)\leq \frac{1}{2}A_d \per\, r^{d}\,p_1^{(\alpha)}(r\,e_d).
\end{align}
Notice that $r^{d}\,p_1^{(\alpha)}(r\,e_d)\in L^{1}((0,\infty))$ because the assumption 
$|\cdot|p_1^{(\alpha)}(\cdot)\in L^{1}(\Rd)$ and polar coordinates  imply that
\begin{align}\label{polarcor}
A_d\int_{0}^{\infty}dr \,r^dp_1^{(\alpha)}(r e_d)=\int_{\Rd}dw |w|p_1^{(\alpha)}(w)<\infty.
\end{align}
Hence,  by combining the Lebesgue Dominated convergence Theorem together with the limit \eqref{limM} and inequality \eqref{ineq2}, we conclude by appealing to the identities \eqref{id3} and \eqref{Fdef} that
\begin{align*}
\mylim{\tgo}{\frac{
\cvf\pthesis{0}\lone{p_1^{(\alpha)}}-t^{-(d\gamma+\beta)}\,\Halp}{t^{\gamma}}}&=\mylim{\tgo}
{\frac{F_{\dom}^{(\alpha)}(t)}{t^{\gamma}}}\\
&=w_{d-1}\per\int_{0}^{\infty}dr\,r^d\,p_1^{(\alpha)}(r\,e_d).
\end{align*}

Regarding the inequality $\eqref{i1}$,  by appealing to the definition of $M_{\dom}(t,r)$ provided in \eqref{Mdef} and part $(iii)$ of Proposition \ref{derlip}, we derive from identity \eqref{Fident} that
\begin{align*}
F_{\dom}^{(\alpha)}(t)\leq t^{\gamma}
\int_{0}^{\infty}dr\,r^{d}\,p_1^{(\alpha)}(r\,e_d)\pthesis{\frac{1}{2}\intd{S^{d-1}}\mathcal{H}^{d-1}(du)V_{u}(\dom)}.
\end{align*}
Thus, it follows from \eqref{perident} that
\begin{align*}
F_{\dom}^{(\alpha)}(t)\leq t^{\gamma}\,w_{d-1}\,\per
\int_{0}^{\infty}dr\,r^{d}\,p_1^{(\alpha)}(r\,e_d).
\end{align*}
Finally, observe that \eqref{id3} and \eqref{Fdef} lead to the desired inequality \eqref{i1} and this finishes the proof of part $(i)$ of Theorem \ref{mthm}.\qed

\

The proof of part $(ii)$ cannot follow the same outline of part $(i)$ because for every $d,n,m$ satisfying $d-nm=-1$ and $n,m>0$, we have
\begin{align*}
\int_{1}^{\infty}dr\frac{r^d}{(1+r^{n})^m}\geq 2^{-m}\int_{1}^{\infty}dr \,\frac{r^d}{r^{nm}}=2^{-m} \int_{1}^{\infty}dr \,r^{-1}=\infty,
\end{align*}
where we have used that $1+r^n\leq 2r^{n}$ for all $r\geq1$. Thus, the divergence of the last integral in turn implies by polar coordinates that $\abs{\cdot}\palp(\cdot)\notin L^{1}(\Rd)$(see \eqref{polarcor}) for every heat kernel $\palp(\cdot )$ satisfying assumptions in part $(ii)$ of Theorem \ref{mthm}.
\bigskip

{\bf Proof of part (ii) of Theorem \ref{mthm}:} We recall that along the proof, we assume that  the heat kernels $\palp(x)$ are explicitly given by
\begin{align}\label{expker}
p^{(\alpha)}_{t}(x)=
\frac{\kappa \,t^{\beta}}
{\pthesis{1+\abs{t^{-\gamma}x}^{n}}^{m}},
\end{align}
with $d-nm=-1$ and  $n,m, \kappa>0$.
By part $d)$ in Proposition  \ref{gprop}, we know that $\cvf(y)=0$ if $|y|>\lo$ with $\lo$ being the diameter of $\dom$. Therefore, we arrive at the following decomposition of the heat content.
\begin{align}
t^{-(d\gamma+\beta)}\,\Halp&=\iord dw\,
p_1^{(\alpha)}(w)\cvf\pthesis{t^{\gamma}w}\\ \nonumber
&=\cvf(0)\lone{p_1^{(\alpha)}}-F_1(t)-t^{\gamma}F_2(t),
\end{align}
where
$$F_1(t)=\cvf(0)\intd{\abs{w}\geq\lo t^{-\gamma}}dw\,
p_1^{(\alpha)}(w)$$
and
\begin{align*}
F_2(t)&=\intd{\abs{w}<\lo t^{-\gamma}}dw\,p_1^{(\alpha)}(w)\pthesis{\frac{\cvf(0)-\cvf(t^{\gamma}w)}{t^{\gamma}}}\\ \nonumber
&=\int_{S^{d-1}}\mathcal{H}^{d-1}(du)\int_{0}^{\lo t^{-\gamma}}dr\, r^d \,p_1^{(\alpha)}(r\,e_d)\pthesis{\frac{\cvf\pthesis{0}-\cvf\pthesis{t^{\gamma}r\,u}}{t^{\gamma}\,r}}.
\end{align*}
Now, due to the fact that $1+r^n>r^{n}$ and $d-nm=-1$, we obtain by appealing to polar coordinates and the explicit form of the heat kernels \eqref{expker} that
\begin{align}\label{ineqint}
F_1(t)=\cvf(0)\,A_d\,\kappa\int_{\lo\,t^{-\gamma}}^{\infty}\frac{dr\,r^{d-1}}{(1+r^n)^m}\leq A_d\,\kappa\,\int_{ \lo t^{-\gamma}}^{\infty}dr\,r^{-2}=|\Omega|\,A_d \, \kappa\,\lo^{-1} \,t^{\gamma}.  
\end{align}

On the other hand, by Proposition \ref{derlip}, we obtain that 
\begin{align*}
0\leq\frac{\cvf\pthesis{0}-\cvf\pthesis{t^{\gamma}r\,u}}{t^{\gamma}\,r}\leq \frac{1}{2}V_u(\dom).
\end{align*}
Thus, we deduce by \eqref{perident} and \eqref{expker} that
\begin{align}\label{F2ineq}
F_2(t)\leq \,\per\,w_{d-1}\,\kappa\int_{0}^{\lo\,t^{-\gamma}}\frac{dr\,r^d}{(1+r^n)^m}.
\end{align} 

Now, we observe   that
\begin{align*}
\int_{0}^{\lo t^{-\gamma}} \frac{dr\,r^d}{(1+r^n)^m} = \int_{0}^{1} \frac{dr\,r^d}{(1+r^n)^m}
+\int_{1}^{\lo t^{-\gamma}} \frac{dr\,r^d}{(1+r^n)^m},
\end{align*}
as long as  $t^{\gamma}<\lo$. Therefore,  we arrive  by using once more that $d-nm=-1$ and $(1+r^n)^{m}\geq r^{nm}$ at
\begin{align*}
\int_{1}^{\lo t^{-\gamma}} \frac{dr\,r^d}{(1+r^n)^m}\leq
\int_{1}^{\lo t^{-\gamma}} dr\,r^{-1}=\ln (\lo)+\gamma\,\ln\pthesis{\frac{1}{t}}.
\end{align*}
Hence, by \eqref{F2ineq}, we have proved that
\begin{align}\label{ineF22}
F_2(t)\leq \kappa\, \per \,w_{d-1}\pthesis{\int_{0}^{1} \frac{dr\,r^d}{(1+r^n)^m}+\ln(\lo)+\gamma \ln \pthesis{\frac{1}{t}}}.
\end{align}
Therefore, by combining  the previous inequalities \eqref{ineqint} and \eqref{ineF22} together with the identity
$$\cvf(0)\,\lone{p_1^{(\alpha)}}- t^{-(\beta+d\gamma)}\Halp=F_1(t)+ t^{\gamma}F_2(t),$$
we arrive at  the desired result and this finishes  the proof  pf part $(ii)$ of Theorem \ref{mthm}. \qed
\\

{\bf Proof of part (iii) of Theorem \ref{mthm}:}
By using that $\indf{\dom}(\cdot)=1-\indf{\dom^c}(\cdot)$, we have
\begin{align}
\cvf(y)=\int_{\Rd}dx\, \indf{\dom}(x)\,\indf{\dom}(x+y)=\ld-h_{\dom}(y),
\end{align} 
where
$$h_{\dom}(y)=\int_{\Rd}dx \,\indf{\dom}(x)\,\indf{\dom^c}(x+y).$$

This implies 
\begin{align*}
\Halp=\iord dz\,
\palp(z)\cvf(z)=\ld \lone{p_1^{(\alpha)}}t^{\beta+d\gamma}-\intd{\Rd}dz\, \palp(z)\,h_{\dom}(z).
\end{align*}

It is easy to show by Fubini's Theorem    that 
\begin{align*}
\intd{\Rd}dz\, \palp(z)\,h_{\dom}(z)=\int_{\dom}dx\int_{\dom^c}dy\,\palp(x-y).
\end{align*}
Thus, we arrive at
\begin{align}\label{id4}
\mylim{\tgo}{\frac{\ld \lone{p_1^{(\alpha)}}t^{\beta+d\gamma}- \Halp}{\phi_{\alpha}(t)}}=\mylim{\tgo}{\int_{\dom}dx\int_{\dom^c}dy
\,\frac{\palp(x-y)}{\phi_{\alpha}(t)}}.
\end{align}
The conditions \eqref{bc}, \eqref{lc} and \eqref{fi1} are given to apply the Lebesgue dominated convergence Theorem to  the right hand side of \eqref{id4} which in turn completes the proof of Theorem \ref{mthm}.

\section{applications to rotational invariant $\alpha$-stable processes, $0<\alpha<2.$}\label{sec:stable}
The heat kernels of  rotational invariant $\alpha$-stable processes  are  probability densities $$\set{\palp(\cdot):\Rd\rightarrow [0,\infty]:t\geq 0,\alpha\in I=(0,2)},$$  which are completely determined by their Fourier transform and they satisfy the following properties.
\begin{enumerate}
\item[(i)] For all $\alpha \in (0,2)$, $t>0$ and $x\in \Rd$, we have that $\wh{\palp}(x)=e^{-t\abs{x}^{\alpha}}$. As a result of this Fourier transform,  we see that $\palp(x)$ is radial and $\lone{\palp}=1$.
\item[(ii)] Scaling Property: for each integer $d\geq 2$ and $\alpha\in (0,2)$,
\begin{align}\label{spstable}
p_t^{(\alpha)}(x)=t^{-\frac{d}{\alpha}}p_{1}^{(\alpha)}(t^{-\frac{1}{\alpha}}x).
\end{align}
\end{enumerate}
That is, they satisfy the scaling property $\eqref{sp}$ with $\beta =-\frac{d}{\alpha}$ and $\gamma=\frac{1}{\alpha}$, where for these values we obtain $\beta+d\gamma=0$.

The transition densities $\palp(x-y)$ are known to have  an explicit expression only for  $\alpha=1$.  In fact, for $\alpha=1$, the function
$p^{(1)}_t(x-y)$ is called the Poisson  heat kernel and  it is given
by
\begin{align}\label{Cauchyk}
p^{(1)}_t(x-y)=\frac{k_d\,t}{\pthesis{t^2+\abs{x-y}^2}^{(d+1)/2}},
\end{align}  
where
\begin{align}\label{kappadef}
k_d=\frac{\Gamma\pthesis{\frac{d+1}{2}}}{\pi^{\frac{d+1}{2}}}.
\end{align}

However, for the purposes of this paper, we only need to make use of  the following two facts about $\palp(x-y)$ for all $\alpha \in (0,2)$. First, there exists $c_{\alpha,d}>0$ such that
\begin{align}\label{tcomp}
c_{\alpha,d}^{-1}\min\set{t^{-d/\alpha},\frac{t}{\abs{x-y}^{d+\alpha}}}\leq
\palp(x-y)\leq c_{\alpha,d}\min\set{t^{-d/\alpha},\frac{t}{\abs{x-y}^{d+\alpha}}},
\end{align}
for all $x,y\in \R^d$ and $t>0$ (see \cite{Chen1}).  Secondly, according to \cite[Theorem 2.1]{Blum}, we have
\begin{align}\label{tlim}
\lim_{\tgo}\frac{\palp(x-y)}{t}=\frac{C_{\alpha,d}}{\abs{x-y}^{d+\alpha}},
\end{align}
for all $x\neq y$, where 
\begin{equation}\label{Adef}
C_{\alpha,d}=\alpha \, 2^{\alpha-1}\, \pi^{-1-\frac{d}{2}}\,\sin\pthesis{\frac{\pi\alpha}{2}}\,\Gamma\pthesis{\frac{d+\alpha}{2}}\,\Gamma\pthesis{\frac{\alpha}{2}}.
\end{equation}

One interesting aspect about the heat kernel $\palp(x-y)$ is that it can be written in terms of the Gaussian kernel \eqref{gk}. Namely, for each $\alpha \in (0,2)$, there exist  probability functions denoted by $\set{\eta^{(\alpha/2)}_t}_{t>0}$ satisfying
\begin{align}\label{dbysub}
\palp(x-y)=\int_0^{\infty} ds\,p^{(2)}_s(x-y) \,\eta_{t}^{(\alpha/2)}(s).
\end{align}
Furthermore, in \cite{Acu0} is proved by means of probabilistic techniques that $s^{\lambda}\,\eta^{(\alpha/2)}_1(s)\in \L^{1}((0,\infty))$ if and only if $-\infty <\lambda< \frac{\alpha}{2}$ and
\begin{equation}\label{gen.expt.S1}
\int_{0}^{\infty}\,ds\,s^{\lambda}\,\eta^{(\alpha/2)}_1(s)=\frac{\Gamma(1-\frac{2\lambda}{\alpha})}{\Gamma(1-\lambda)}.
\end{equation}

The aforementioned estimates gives the following result.

\begin{lem}\label{finiteexp}
$\abs{\cdot}p_1^{(\alpha)}(\cdot)\in L^{1}(\Rd)$ if and only if $r^{d}p_1^{(\alpha)}(r\,e_d)\in L^{1}((0,\infty))$ if and only if $\alpha\in (1,2)$. Moreover,
\begin{align}
\int_{0}^{\infty}dr\,r^{d}p_1^{(\alpha)}(r)=\Gamma\pthesis{\frac{d+1}{2}}\pi^{-\frac{d+1}{2}}\Gamma\pthesis{1-\frac{1}{\alpha}}.
\end{align}
\end{lem}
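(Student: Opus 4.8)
The plan is to treat the three assertions separately: the two integrability criteria coincide after passing to polar coordinates, each is equivalent to $\alpha\in(1,2)$ because of the global two-sided estimate \eqref{tcomp}, and the explicit value then follows from the subordination identity \eqref{dbysub} together with the moment formula \eqref{gen.expt.S1}.

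First I would observe that the polar-coordinate identity already recorded in \eqref{polarcor},
\[
A_d\int_0^\infty dr\,r^{d}\,p_1^{(\alpha)}(r\,e_d)=\iord dw\,\abs{w}\,p_1^{(\alpha)}(w),
\]
shows that the statements $\abs{\cdot}p_1^{(\alpha)}(\cdot)\in L^{1}(\Rd)$ and $r^{d}p_1^{(\alpha)}(r\,e_d)\in L^{1}((0,\infty))$ are literally the same, so it remains to decide for which $\alpha$ this common integral converges. Setting $t=1$ in \eqref{tcomp} gives that $p_1^{(\alpha)}(r\,e_d)$ is comparable, up to the constant $c_{\alpha,d}$, to $\min\set{1,r^{-(d+\alpha)}}$, hence $r^{d}p_1^{(\alpha)}(r\,e_d)$ is comparable to $\min\set{r^{d},r^{-\alpha}}$. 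On $(0,1)$ the latter is bounded and therefore integrable, while on $(1,\infty)$ it behaves like $r^{-\alpha}$, which is integrable exactly when $\alpha>1$; since $\alpha$ ranges over $(0,2)$ throughout this section, finiteness is equivalent to $\alpha\in(1,2)$.

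To compute the integral for $\alpha\in(1,2)$, I would insert the subordination formula \eqref{dbysub} evaluated at the point $r\,e_d$ with $t=1$, namely $p_1^{(\alpha)}(r\,e_d)=\int_0^{\infty}ds\,p_s^{(2)}(r\,e_d)\,\eta_1^{(\alpha/2)}(s)$, and then appeal to Tonelli's theorem (all integrands are non-negative) to interchange the order of integration:
\[
\int_0^{\infty}dr\,r^{d}p_1^{(\alpha)}(r\,e_d)=\int_0^{\infty}ds\,\eta_1^{(\alpha/2)}(s)\int_0^{\infty}dr\,r^{d}(4\pi s)^{-d/2}\,\myexp{-\tfrac{r^{2}}{4s}}.
\]
The inner integral is an elementary Gaussian moment: the substitution $u=r^{2}/(4s)$ converts it into $\pi^{-d/2}\,s^{1/2}\,\Gamma\pthesis{\tfrac{d+1}{2}}$. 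Substituting $\lambda=\tfrac12$ into \eqref{gen.expt.S1}, which is legitimate precisely because $\tfrac12<\tfrac{\alpha}{2}$ (that is, $\alpha>1$), yields $\int_0^{\infty}ds\,s^{1/2}\eta_1^{(\alpha/2)}(s)=\Gamma\pthesis{1-\tfrac1\alpha}/\Gamma\pthesis{\tfrac12}=\pi^{-1/2}\Gamma\pthesis{1-\tfrac1\alpha}$, and multiplying the two factors produces $\pi^{-(d+1)/2}\Gamma\pthesis{\tfrac{d+1}{2}}\Gamma\pthesis{1-\tfrac1\alpha}$, which is the claimed formula.

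I do not expect a genuine obstacle here; the argument is mostly bookkeeping. The two points that merit a word of justification are the application of Tonelli (immediate from positivity of the Gaussian kernels and of $\eta_1^{(\alpha/2)}$) and the check that $\lambda=\tfrac12$ lies in the admissible range $(-\infty,\tfrac{\alpha}{2})$ of \eqref{gen.expt.S1}, which corresponds exactly to the threshold $\alpha=1$. As a byproduct, this subordination computation also re-proves the divergence for $\alpha\le1$, since $\int_0^{\infty}ds\,s^{1/2}\eta_1^{(\alpha/2)}(s)=\infty$ whenever $\tfrac12\ge\tfrac{\alpha}{2}$, so one could even dispense with \eqref{tcomp} in the second step.
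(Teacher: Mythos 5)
Your argument is correct and follows the same route as the paper: both proofs reduce the radial moment via the subordination formula \eqref{dbysub}, evaluate the inner Gaussian integral by the substitution $u=r^2/(4s)$, and then invoke \eqref{gen.expt.S1} with $\lambda=\tfrac12$, which is admissible precisely when $\alpha>1$. The preliminary detour through the two-sided bound \eqref{tcomp} to locate the threshold $\alpha=1$ is, as you yourself note at the end, redundant — the paper skips it and lets the subordination computation and the stated integrability range of $s^{\lambda}\eta_1^{(\alpha/2)}(s)$ do all the work at once.
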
 
\begin{proof}
Observe that \eqref{dbysub} yields
\begin{align}\label{intcal}
\int_{0}^{\infty}dr\,r^d\,p_1^{(\alpha)}(re_d)=(4\pi)^{-d/2}\int_{0}^{\infty}ds \,\eta^{(\alpha/2)}_1(s)s^{-d/2}\int_{0}^{\infty}dr\, r^{d}\exp\pthesis{{-\frac{r^2}{4s}}}.
\end{align}
Next, the change of variables $w=\frac{r^{2}}{4s}$ shows that
$$\int_{0}^{\infty}dr\, r^{d}\exp\pthesis{{-\frac{r^2}{4s}}}=4^{d/2}s^{\frac{d+1}{2}}\Gamma\pthesis{\frac{d+1}{2}}.$$
Thus, the desired result is obtained by replacing the last identity into the integral \eqref{intcal} and using \eqref{gen.expt.S1} with $\lambda=1/2$.
\end{proof}

The following theorem is the main result concerning the small time behavior of the heat content of $\dom$ in $\Rd$ when dealing with the heat kernels of  rotationally invariant $\alpha$-stable processes. It is remarkable that part $a)$ and $c)$ of the next result  are stronger than  Theorem 1.2 in \cite{Acu} since uniformly $C^{1,1}$-regular bounded domains have according to Example \ref{Smoothbd} finite perimeter.

\begin{thm}\label{mthm2} 
Consider $\dom \in \lm$ a bounded set with $\per<\infty$ and $d\geq 2$ integer.
\begin{enumerate}
\item[$ (a)$]Let $\alpha\in (1,2)$. Then, we have for all $t>0$ that 
\begin{align*}
\ld- \Halp\leq\frac{ t^{\frac{1}{\alpha}}}{\pi}\,\Gamma\pthesis{1-\frac{1}{\alpha}} \per.
\end{align*}
Furthermore,
\begin{align*}
\mylim{\tgo}{\frac{\ld- \Halp}{t^{\frac{1}{\alpha}}}}=\frac{1}{\pi}\Gamma\pthesis{1-\frac{1}{\alpha}}\per.
\end{align*}

\item[$(b)$] For $\alpha=1$, we have for all  $t<\lo=\sup\set{|x-y|:x,y\in \Omega}$ that
\begin{align*}
\ld- \mathbb{H}_{\Omega}^{(1)}(t)\leq t\pthesis{\lambda(\dom)+\frac{1}{\pi}\per
\ln\pthesis{\frac{1}{t}}}.
\end{align*}
Here,
$$\lambda(\dom)=\ld\, \lo^{-1}\,A_d\,\kappa_{d}+ \frac{\per}{\pi}\pthesis{\ln(\lo)+\int_{0}^{1}\frac{dr\,r^d}{(1+r^2)^\frac{d+1}{2}}},$$
and $\kappa_d$ and $A_d$ as given in \eqref{kappadef} and \eqref{vaunitball}, respectively.
In particular, we arrive at
\begin{align}\label{limsup}
\varlimsup_{\tgo}\frac{\ld- \mathbb{H}^{(1)}_{\Omega}(t)}{t\ln\pthesis{\frac{1}{t}}}\leq \frac{1}{\pi}\per.
\end{align}
In addition, the inequality \eqref{limsup} is sharp. That is, if B is the unit ball in $\Rd$, then
\begin{align}\label{sharp}
\mylim{\tgo}{\frac{\abs{B}- \mathbb{H}^{(1)}_{B}(t)}{t\ln\pthesis{\frac{1}{t}}}}=\frac{1}{\pi}Per(B).
\end{align}

\item[$(c)$] For $0<\alpha<1$, 
\begin{align*}
\mylim{\tgo}{\frac{\ld- \Halp}{t}}=
C_{\alpha,d}\,\,\myP_{\alpha}(\Omega),
\end{align*}
\end{enumerate}
where 
\begin{align}\label{alphaper}
\myP_{\alpha}(\Omega)=\int_{\dom}\int_{\dom^c}\frac{dx\,dy}{|x-y|^{d+\alpha}},
\end{align}
with $C_{\alpha,d}$ as given in \eqref{Adef}.
\end{thm}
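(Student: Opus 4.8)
\textbf{Proof plan for Theorem \ref{mthm2}.}

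The plan is to derive all three parts as direct specializations of Theorem \ref{mthm}, after identifying the relevant index sets $I_0, I_1, I_2$ for the $\alpha$-stable family. Recall that for these kernels $\beta = -\tfrac{d}{\alpha}$, $\gamma = \tfrac{1}{\alpha}$, so $\beta + d\gamma = 0$ and $\lone{p_1^{(\alpha)}} = 1$; hence the left-hand quantity $\lone{p_1^{(\alpha)}}\ld - t^{-(\beta+d\gamma)}\Halp$ appearing throughout Theorem \ref{mthm} collapses to exactly $\ld - \Halp$, and the normalizing power $t^\gamma$ becomes $t^{1/\alpha}$.

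For part $(a)$, the case $\alpha \in (1,2)$ is precisely the regime $I_0 = \{\alpha : |\cdot|\, p_1^{(\alpha)}(\cdot) \in L^1(\Rd)\}$: by Lemma \ref{finiteexp}, $\alpha \in (1,2)$ is equivalent to $|\cdot|\,p_1^{(\alpha)}(\cdot) \in L^1(\Rd)$. I would then invoke \eqref{i1} and \eqref{lim1} directly, and substitute the explicit value of $\int_0^\infty dr\, r^d\, p_1^{(\alpha)}(r\,e_d)$ computed in Lemma \ref{finiteexp}, namely $\Gamma\!\left(\tfrac{d+1}{2}\right)\pi^{-(d+1)/2}\,\Gamma\!\left(1 - \tfrac{1}{\alpha}\right)$. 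The constant $w_{d-1}$ times this quantity should simplify: using $w_{d-1} = \pi^{(d-1)/2}/\Gamma\!\left(\tfrac{d+1}{2}\right)$ from \eqref{vaunitball}, the product $w_{d-1}\,\Gamma\!\left(\tfrac{d+1}{2}\right)\pi^{-(d+1)/2} = \pi^{(d-1)/2 - (d+1)/2} = \pi^{-1}$, which yields exactly the claimed constant $\tfrac{1}{\pi}\Gamma\!\left(1 - \tfrac{1}{\alpha}\right)$. This constant-chasing is the only real content of part $(a)$.

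For part $(b)$, $\alpha = 1$, I would check that the Poisson kernel \eqref{Cauchyk}, rewritten via the scaling property as $p^{(1)}_t(x) = \kappa_d\, t^{-d}/(1 + |t^{-1}x|^2)^{(d+1)/2}$, falls into the family $I_1$ of Theorem \ref{mthm}$(ii)$ with $n = 2$, $m = \tfrac{d+1}{2}$, $\kappa = \kappa_d$, and $d - nm = d - (d+1) = -1$, and $\gamma = 1$. Then parts (ii) of Theorem \ref{mthm} apply verbatim, giving the stated inequality with $\lambda(\dom)$ and the $\varlimsup$ bound \eqref{limsup}. The sharpness claim \eqref{sharp} for the unit ball is the one genuinely new ingredient: here I would need a lower bound matching the $\tfrac{1}{\pi}t\ln(1/t)\,Per(B)$ rate. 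The natural route is to return to identity \eqref{id3} and the decomposition $t^{-(d\gamma+\beta)}\Halp = \cvf(0) - F_1(t) - t F_2(t)$ from the proof of part (ii); since all the inequalities used there ($1 + r^n \geq r^n$, the Lipschitz bound on $\cvf$) become asymptotically tight, one shows that $F_2(t) \sim \kappa_d\, w_{d-1}\, Per(B)\,\ln(1/t)$ as $\tgo$ by splitting $\int_0^{\lo t^{-1}}$ and using the known explicit covariance function of the ball (whose derivative at $0+$ gives the perimeter exactly), while $F_1(t) = O(t)$ is lower-order. The main obstacle is making this two-sided estimate genuinely tight: one must control $M_\Omega(t,r)$ uniformly enough that the $\ln(1/t)$ coefficient is captured, which for the ball is feasible because $\cvf$ is radial, explicit, and $C^1$ away from the diameter.

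For part $(c)$, $0 < \alpha < 1$, I would place the stable kernels in $I_2$ of Theorem \ref{mthm}$(iii)$ with $\phi_\alpha(t) = t$, $J_\alpha(x) = C_{\alpha,d}\,|x|^{-(d+\alpha)}$, and $\Lambda_\alpha(x) = c_{\alpha,d}\,|x|^{-(d+\alpha)}$ (for $|x|$ bounded, which suffices since $\dom$ is bounded, one may truncate or use the two-sided bound on the relevant region). The hypothesis \eqref{bc} follows from the upper estimate in \eqref{tcomp} (taking the $t/|x-y|^{d+\alpha}$ branch, valid for $0 < t \leq 1$ once $|x-y|$ is bounded below on the relevant set), \eqref{lc} follows from \eqref{tlim}, and the integrability \eqref{fi1} holds because $\alpha < 1$ forces $d + \alpha < d + 1$, and for a set of finite perimeter the integral $\int_\dom dx \int_{\dom^c} dy\, |x-y|^{-(d+\alpha)} = \myP_\alpha(\dom)$ is finite precisely in this range — this is the fractional-perimeter finiteness, which one can either cite or verify by covering $\bd$ and using the perimeter bound. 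Then Theorem \ref{mthm}$(iii)$ gives the limit equal to $\int_\dom dx \int_{\dom^c} dy\, J_\alpha(x-y) = C_{\alpha,d}\,\myP_\alpha(\dom)$ directly. The main delicacy here is verifying \eqref{bc}–\eqref{fi1} cleanly near the boundary (where $|x-y|$ is small) and near infinity (where, since $\dom^c$ is unbounded, one needs decay of $|x-y|^{-(d+\alpha)}$, which holds as $d + \alpha > d$); neither is serious, but the boundedness of $\dom$ and finiteness of the perimeter must be used at exactly the right place.
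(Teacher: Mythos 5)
Your plan for parts (a), (c), and the upper bound of (b) matches the paper exactly: specialize Theorem \ref{mthm} with $\beta=-d/\alpha$, $\gamma=1/\alpha$ (so $\beta+d\gamma=0$, $\lone{p_1^{(\alpha)}}=1$), use Lemma \ref{finiteexp} and the identity $w_{d-1}\Gamma\pthesis{\tfrac{d+1}{2}}\pi^{-(d+1)/2}=\pi^{-1}$ for (a), the two--sided bound \eqref{tcomp} and the Blumenthal--Getoor limit \eqref{tlim} together with the finiteness of $\myP_\alpha(\Omega)$ for $\alpha<1$ for (c), and $n=2$, $m=\tfrac{d+1}{2}$, $\kappa=\kappa_d$ for (b). The only place your route genuinely diverges from the paper is the sharpness claim \eqref{sharp}. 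You propose to revisit the $F_1/F_2$ decomposition from the proof of Theorem \ref{mthm}(ii) and argue that the inequalities used there become asymptotically tight for the ball. The paper instead devotes \S\ref{sec:poisson} to a direct computation: Lemma \ref{impl} gives a closed formula for $g_B(ae_d)=|B\cap B_1(ae_d)|$ as the difference of a ``cap'' term and a ``slab'' term, which produces the exact split $\myH^{(1)}_B(t)=N_1(t)-\tfrac{1}{\pi}Per(B)\,t\,N_2(t)$; the bound $N_1(t)\leq|B|$ then reduces the lower estimate to $N_2(t)\gtrsim\ln(1/t)$, proved in Proposition \ref{propint}. Both routes rest on the same input (the ball covariance is radial and explicit; the $\ln(1/t)$ rate is the divergence of $\int_1^{1/t}r^{-1}dr$), but your phrase ``since all the inequalities used there become asymptotically tight'' conceals exactly the work that Lemma \ref{impl} and Proposition \ref{propint} do: the Lipschitz bound on $\cvf$ is tight only as $t^{\gamma}r\to 0$, while the bound $1+r^n\geq r^n$ is tight only as $r\to\infty$, and one must show these two regimes jointly account for the full $\ln(1/t)$ coefficient uniformly over $r\in[1,\lo t^{-1}]$. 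Carried out carefully your plan would recover the same explicit formula for $g_B$ anyway; the paper's algebraic decomposition is simply the cleanest way to make the coefficient exact rather than merely an upper bound.
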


\begin{rmk}
We point out that $\myP_{\alpha}(\Omega)$ defined in \eqref{alphaper} is called the $\alpha$-perimeter and it turns out to be linked with celebrated Hardy and isoperimetric inequalities.  We refer the  reader to the papers of Z. Q. Chen, R. Song \cite{Song} and R. L. Frank, R. Seiringer \cite{Frank} for  further results involving this quantity. In fact, it is shown in \cite{Frank} that there  exists $\lambda_{d,\alpha}>0$ such that
\begin{align*}
|\Omega|^{(d-\alpha)/d}\leq \lambda_{d,\alpha}\,\myP_{\alpha}\pthesis{\Omega},
\end{align*}
with equality if and only if $\Omega$ is a ball.
It is also proved in \cite{Lau} and \cite{FracP} that
\begin{align*}
\lim_{\alpha\downarrow 0}\alpha \myP_{\alpha}(\Omega)=d\,\,|B_1(0)|\,\,|\Omega|,\nonumber \\
\lim_{\alpha\uparrow 1}(1-\alpha)\myP_{\alpha}(\Omega)=K_{d}\,\,\mathcal{H}^{d-1}(\partial \Omega),
\end{align*}  
for some $K_d>0.$
\end{rmk}
{\bf Proof of Theorem \ref{mthm2}:} This theorem is basically a consequence of applying Theorem \ref{mthm} with $\beta=-\frac{d}{\alpha}$ and $\gamma=\frac{1}{\alpha}$. For the proof of part $a)$, we apply  part $(i)$ of Theorem \ref{mthm}   with $I_1=(1,2)$ due to Lemma \ref{finiteexp}. Regarding part $(c)$, we apply  part $(iii)$ of Theorem \ref{mthm}  with (based on \eqref{tcomp} and \eqref{tlim}) $\phi_{\alpha}(t)=t$, $\Lambda_{\alpha}(x)=c_{\alpha,d}\abs{x}^{-(d+\alpha)}$ and $J_{\alpha}(x)=C_{\alpha,d}\abs{x}^{-(d+\alpha)}$, where we need to make use of the fact that $\myP_{\alpha}(\Omega)<\infty$ if and  only if $\alpha\in (0,1)$ provided that $\per<\infty$, according to Corollary 2.13  in \cite{Lau}.

As far as part $(b)$, we appeal to part $(ii)$ of Theorem \ref{mthm} with $\kappa=\kappa_{d}$, $n=2$ and $m=\frac{d+1}{2}$. Notice that according to \eqref{vaunitball} and \eqref{kappadef}, we have
\begin{align}\label{picf}
\kappa_{d}\,w_{d-1}=\Gamma\pthesis{\frac{d+1}{2}}\pi^{-\frac{d+1}{2}}\cdot\frac{\pi^{\frac{d-1}{2}}}{\Gamma\pthesis{1+\frac{d-1}{2}}}=\frac{1}{\pi}.
\end{align}

 Concerning the limit 
\eqref{sharp}, we could apply Theorem 1.2 in \cite{Acu}, however to  prove that theorem, local coordinates around each point of the boundary are required which makes the proof very complicated. For this reason, in order to make this presentation as clear as possible, we provide a somewhat simple proof for \eqref{sharp} and we devote the next section to it. \qed

\section{heat content behavior for the poisson kernel \eqref{Cauchyk} over the unit ball.}\label{sec:poisson}

We want to study the small time asymptotic behavior of 
$$\myH_{B}^{(1)}(t)=\int_{\Rd}dx\, \ck{t}(x)g_{B}(x)=\int_{\Rd}dx\, \ck{1}(x)\,g_{B}(tx),$$
where $B\subset\Rd$ is the unit ball and $p_t^{(1)}(x)$ is the heat kernel described in \eqref{Cauchyk}. Observe that $B+z=B_1(z)=\set{x\in\Rd:|x-z|<1}$ so that $g_B(z)=|B\cap(B+z)|$
represents the volume of the intersection of two balls of radii one. It is also geometrically clear that $g_B(z)=g_B(Tz)$ for any orthonormal linear transformation $T$ on $\Rd$, which implies that $g_B$ is radial so that
\begin{align}\label{gradial}
g_B(z)=g_B(\abs{z}e_d),
\end{align}
with $e_d=(0,....0,1)\in \Rd$.

The following lemma provides a formula for the volume of  the intersection of two unit balls  in $\Rd$.
\begin{lem}\label{impl} 
Let $d\geq 2$ be an integer and  $0\leq a\leq 2$. 
Let $B=B_1(0)$ and $B(a)=B_1(ae_d)$. Then,  we have
\begin{align}\label{volball}
g_B(a\,e_d)=\abs{B\cap B(a)}=2\,A_{d-1}\,\Theta\pthesis{\sqrt{1-\frac{a^2}{4}}}-a\,w_{d-1}\,\pthesis{1-\frac{a^2}{4}}^{\frac{d-1}{2}},
\end{align}
where 
$$\Theta(z)=\int_{0}^{\arcsin(z)}d\theta\,\sin^{d-2}(\theta)\,\cos^{2}(\theta),$$
for $0\leq z\leq 1$ and $w_{d-1},A_{d-1}$ as defined in \eqref{vaunitball}. In particular, by taking $a=0$, we obtain
\begin{align}\label{theta1}
\Theta(1)=\frac{|B|}{2A_{d-1}}.
\end{align}
\end{lem}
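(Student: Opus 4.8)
\textbf{Proof plan for Lemma \ref{impl}.}
The plan is to compute $\abs{B\cap B(a)}$ directly by slicing perpendicular to the $e_d$-axis. Write a point of $\Rd$ as $(x',x_d)$ with $x'\in\R^{d-1}$. For a fixed height $x_d=s$, the slice of $B=B_1(0)$ is the $(d-1)$-ball of radius $\sqrt{1-s^2}$ (nonempty for $s\in[-1,1]$), and the slice of $B(a)=B_1(a e_d)$ is the $(d-1)$-ball of radius $\sqrt{1-(s-a)^2}$ (nonempty for $s\in[a-1,a+1]$); both are centered at the origin of the slice hyperplane, so the slice of the intersection is simply the smaller of the two $(d-1)$-balls. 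Since $0\le a\le 2$, the two slabs overlap in $s\in[a-1,1]$, and the crossover happens where $1-s^2=1-(s-a)^2$, i.e. at $s=a/2$. Thus $B$ gives the smaller radius for $s\in[a/2,1]$ and $B(a)$ for $s\in[a-1,a/2]$; by the reflection symmetry $s\mapsto a-s$ (which swaps the two balls), the two contributions are equal, so
\begin{align*}
\abs{B\cap B(a)}=2\,w_{d-1}\int_{a/2}^{1}ds\,\pthesis{1-s^2}^{\frac{d-1}{2}}.
\end{align*}

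Next I would evaluate this one-dimensional integral with the substitution $s=\cos\psi$, or more conveniently $s=\sin\theta$, turning $\int_{a/2}^{1}\pthesis{1-s^2}^{\frac{d-1}{2}}\,ds$ into $\int_{\arcsin(a/2)}^{\pi/2}\cos^{d}\theta\,d\theta$. Writing $\cos^d\theta=\cos^{d-2}\theta\,(1-\sin^2\theta)$ is not quite the shape of $\Theta$; instead I would integrate by parts once, or equivalently use the standard reduction obtained from $\frac{d}{d\theta}\pthesis{\sin\theta\cos^{d-1}\theta}=\cos^{d}\theta-(d-1)\sin^2\theta\cos^{d-2}\theta$. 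Rearranging gives
\begin{align*}
\int \cos^{d}\theta\,d\theta=\sin\theta\cos^{d-1}\theta+(d-1)\int\sin^{2}\theta\cos^{d-2}\theta\,d\theta
=\sin\theta\cos^{d-1}\theta+(d-1)\int\sin^{d-2}\theta\cos^{2}\theta\,d\theta
\end{align*}
after the further substitution $\theta\mapsto \pi/2-\theta$ in the remaining integral, which exchanges $\sin$ and $\cos$ and reverses orientation, sending $\int_{\arcsin(a/2)}^{\pi/2}\sin^{d-2}\theta\cos^2\theta\,d\theta$ to $\int_{0}^{\arccos(a/2)}\sin^{d-2}\theta\cos^2\theta\,d\theta=\Theta\pthesis{\sqrt{1-a^2/4}}$, since $\arccos(a/2)=\arcsin\pthesis{\sqrt{1-a^2/4}}$. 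Evaluating the boundary term $\sin\theta\cos^{d-1}\theta$ between $\arcsin(a/2)$ and $\pi/2$ contributes $-\tfrac a2\pthesis{1-a^2/4}^{\frac{d-1}{2}}$.

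Putting the pieces together: $\abs{B\cap B(a)}=2w_{d-1}\pthesis{(d-1)\Theta\pthesis{\sqrt{1-a^2/4}}-\tfrac a2\pthesis{1-a^2/4}^{\frac{d-1}{2}}}$, and since $A_{d-1}=(d-1)w_{d-1}$ by \eqref{vaunitball}, this is exactly $2A_{d-1}\Theta\pthesis{\sqrt{1-a^2/4}}-a\,w_{d-1}\pthesis{1-a^2/4}^{\frac{d-1}{2}}$, which is \eqref{volball}. Finally, setting $a=0$ gives $B(0)=B$, so the left side is $\abs{B}$ and the right side is $2A_{d-1}\Theta(1)$, yielding \eqref{theta1}. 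I expect the only delicate point to be bookkeeping the integration-by-parts/reduction step and the change of variable that matches the leftover integral to the precise definition of $\Theta$ (getting the limits $\arcsin\pthesis{\sqrt{1-a^2/4}}$ right and confirming the boundary term's sign); the slicing step itself and the symmetry argument are routine once the crossover point $s=a/2$ is identified.
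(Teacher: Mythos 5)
Your argument is correct, but it uses a different Fubini decomposition than the paper. You slice perpendicular to the $e_d$-axis: fix $x_d=s$, observe that the slice of $B\cap B(a)$ is the smaller of two concentric $(d-1)$-balls, identify the crossover at $s=a/2$, and use the reflection $s\mapsto a-s$ to reduce to $2\,w_{d-1}\int_{a/2}^{1}(1-s^2)^{\frac{d-1}{2}}\,ds$; you then need the reduction formula $\int\cos^{d}\theta\,d\theta=\sin\theta\cos^{d-1}\theta+(d-1)\int\sin^{2}\theta\cos^{d-2}\theta\,d\theta$ together with $\theta\mapsto\pi/2-\theta$ and $A_{d-1}=(d-1)w_{d-1}$ to match the given form of $\Theta$. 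The paper instead integrates in the opposite order: it fixes $\bar{x}\in\R^{d-1}$ with $|\bar{x}|\le\ell=\sqrt{1-a^2/4}$, integrates $x_d$ over the lens height to get $2\sqrt{1-|\bar{x}|^2}-a$, and then passes to polar coordinates in $\R^{d-1}$, obtaining $2A_{d-1}\int_0^{\ell}r^{d-2}\sqrt{1-r^2}\,dr-a\,w_{d-1}\ell^{d-1}$, which after $r=\sin\theta$ is immediately $2A_{d-1}\Theta(\ell)-a\,w_{d-1}\ell^{d-1}$ with no integration by parts. The paper's ordering is tailored so that $\Theta$ appears directly; your ordering is perhaps more geometrically transparent (volume by slices of a lens) but costs one extra reduction step and the $\arcsin/\arccos$ swap. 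Both are sound, and your boundary-term bookkeeping ($-\tfrac{a}{2}(1-a^2/4)^{\frac{d-1}{2}}$) and identification $\arccos(a/2)=\arcsin(\sqrt{1-a^2/4})$ are correct.
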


\begin{proof} If $a=0$, the result is obvious. Assume $0<a\leq 2$.
We start by representing every point $x\in \Rd$ as $x=(\bar{x},x_d)\in \R^{d-1}\times \R$. It is not difficult to see that under these coordinates, we have
$$B\cap B(a)=\set{(\bar{x},x_d)\in \Rd: \abs{\bar{x}}\leq \sqrt{1-\frac{a^2}{4}},\,\, a-\sqrt{1-\abs{\bar{x}}^2}<x_d<\sqrt{1-\abs{\bar{x}}^2}}.$$
Therefore, by setting $\ell=\sqrt{1-\frac{a^2}{4}}$, we arrive at
\begin{align*}
\abs{B\cap B(a)}&=\int_{B_{\ell}(0)\subset \R^{d-1}}d\bar{x}\int_{a-\sqrt{1-\abs{\bar{x}}^2}}^{\sqrt{1-\abs{\bar{x}}^2}}
dx_d
=2\int_{B_{\ell}(0)\subset \R^{d-1}}d\bar{x}\,\sqrt{1-\abs{\bar{x}}^2}-a\abs{B_{\ell}(0)}\\
&=2A_{d-1}\int_{0}^{\ell}dr \,\,r^{d-2}\,\,\sqrt{1-r^2}-aw_{d-1}\ell^{d-1},
\end{align*}
where we have appealed to polar coordinates to obtain the last equality. Thus, the desired identity \eqref{volball} follows from the last  expression  and by performing the change of variable $r=\sin(\theta)$ in the integral term.
\end{proof}

Since $g_{B}(y)=0$ when $|y|\geq 2$ by Proposition \ref{gprop}  and $g_{B}(y)$ is radial (see \eqref{gradial}), we obtain by combining the identities $A_d=Per(B)$ and \eqref{picf} together with Lemma \ref{impl}    the following decomposition for $\myH_{B}^{(1)}(t)$.
\begin{align}\label{cydec}
\myH_{B}^{(1)}(t)&=A_d\int_{0}^{2t^{-1}}dr \,r^{d-1}\,\ck{1}(re_d)\,g_{B}(tr \,e_d)\\ \nonumber&=
N_1(t)-A_d\,\kappa_d\,w_{d-1}\,t\,N_2(t)=N_1(t)-\frac{1}{\pi}Per(B)\,t\,N_2(t),
\end{align}
where
\begin{align*}
\,\,\,\,\,\,\ \,\,\,\,\,\,\,\,\,\,\,\,\,\,\,N_1(t)&=2A_d\,A_{d-1}\kappa_d\int_{0}^{2t^{-1}}\frac{dr\, r^{d-1}}{(1+r^2)^{\frac{d+1}{2}}}\Theta\pthesis{\sqrt{1-\frac{t^2r^2}{4}}}
\end{align*}
and
\begin{align}\label{N2def}
\,\,\,\,\,\,\,\,\,\,\,\,\,\,\,\,\,\,\,\,N_2(t)&=\int_{0}^{2t^{-1}}\frac{dr\,r^d}{(1+r^2)^{\frac{d+1}{2}}}\pthesis{1-\frac{t^2r^2}{4}}^{\frac{d-1}{2}}.
\end{align}
Because $\Theta$ given in Lemma \ref{impl}  is an increasing function and $2A_{d-1}\Theta(1)=|B|$, we obtain that
\begin{align*}
N_1(t)\leq 2\,A_{d-1} \,\Theta(1)\,A_d\,\kappa_d\int_{0}^{\infty}\frac{dr\, r^{d-1}}{(1+r^2)^{\frac{d+1}{2}}}=|B|\cdot\lone{p_1^{(1)}}=\abs{B},
\end{align*}
so that by \eqref{cydec}, we have
\begin{align*}
\frac{1}{\pi}Per(B)\,t\, N_2(t)=N_1(t)-\myH_{B}^{(1)}(t)\leq \abs{B}-\myH_{B}^{(1)}(t).
\end{align*}
Observe that \eqref{limsup} tells us that
\begin{align*}
\varlimsup_{\tgo}\frac{\abs{B}- \mathbb{H}^{(1)}_{B}(t)}{t\ln\pthesis{\frac{1}{t}}}\leq \frac{1}{\pi}Per(B).
\end{align*}
Hence, to prove \eqref{sharp}, it suffices to show the following.
\begin{prop}\label{propint}
Consider the function $N_2(t)$  defined in \eqref{N2def}. Then,
$$1\leq \varliminf\limits_{\tgo}\frac{N_2(t)}{\ln\pthesis{\frac{1}{t}}}.$$
\end{prop}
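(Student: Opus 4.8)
The plan is to bound $N_2(t)$ from below by restricting the range of integration to a region where both factors in the integrand are comparable to the ``critical'' decay $r^{-1}$. Recall from \eqref{N2def} that
\[
N_2(t)=\int_{0}^{2t^{-1}}\frac{dr\,r^d}{(1+r^2)^{\frac{d+1}{2}}}\pthesis{1-\frac{t^2r^2}{4}}^{\frac{d-1}{2}}.
\]
First I would discard the tail and integrate only over $1\leq r\leq t^{-1}$, say. On this range we have $1+r^2\leq 2r^2$, so that $(1+r^2)^{\frac{d+1}{2}}\leq 2^{\frac{d+1}{2}}\,r^{d+1}$, whence $\dfrac{r^d}{(1+r^2)^{\frac{d+1}{2}}}\geq 2^{-\frac{d+1}{2}}\,r^{-1}$. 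Simultaneously, for $r\leq t^{-1}$ we have $\dfrac{t^2 r^2}{4}\leq \dfrac14$, so $\pthesis{1-\frac{t^2r^2}{4}}^{\frac{d-1}{2}}\geq \pthesis{\frac34}^{\frac{d-1}{2}}$, a positive constant independent of $t$ and $r$. Combining these two estimates,
\[
N_2(t)\geq 2^{-\frac{d+1}{2}}\pthesis{\tfrac34}^{\frac{d-1}{2}}\int_{1}^{t^{-1}}\frac{dr}{r}=c_d\,\ln\pthesis{\frac{1}{t}},
\]
valid for all $t<1$, where $c_d=2^{-\frac{d+1}{2}}(3/4)^{\frac{d-1}{2}}>0$.

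This already shows $\varliminf_{\tgo} N_2(t)/\ln(1/t)\geq c_d$, but $c_d<1$, so a cruder cutoff is not enough; the constant $1$ is sharp and must be recovered exactly. The fix is to keep the cutoff at $r\leq t^{-1}$ (so the $(1-t^2r^2/4)$ factor is bounded below by $(3/4)^{(d-1)/2}$, which tends to... no — better: let the lower endpoint grow slowly and the upper endpoint be $\varepsilon t^{-1}$ for small fixed $\varepsilon$). Concretely, fix $\varepsilon\in(0,2)$ and restrict to $\delta^{-1}\leq r\leq \varepsilon t^{-1}$ where $\delta$ is a large fixed constant. On this range $1+r^2\leq (1+\delta^2/1)\,r^2\cdot(1+o(1))$; more carefully $\dfrac{r^d}{(1+r^2)^{\frac{d+1}{2}}}=r^{-1}(1+r^{-2})^{-\frac{d+1}{2}}\geq r^{-1}(1+\delta^2)^{-\frac{d+1}{2}}$ for $r\geq\delta^{-1}$... that constant is bad too. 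The right move is instead to send the lower endpoint to infinity: for any $R>1$, on $R\leq r\leq \varepsilon t^{-1}$ we have $(1+r^{-2})^{-\frac{d+1}{2}}\geq (1+R^{-2})^{-\frac{d+1}{2}}$ and $(1-t^2r^2/4)^{\frac{d-1}{2}}\geq (1-\varepsilon^2/4)^{\frac{d-1}{2}}$, giving
\[
N_2(t)\geq (1+R^{-2})^{-\frac{d+1}{2}}(1-\tfrac{\varepsilon^2}{4})^{\frac{d-1}{2}}\,\ln\!\pthesis{\frac{\varepsilon t^{-1}}{R}}
=(1+R^{-2})^{-\frac{d+1}{2}}(1-\tfrac{\varepsilon^2}{4})^{\frac{d-1}{2}}\pthesis{\ln\tfrac1t+\ln\tfrac{\varepsilon}{R}}.
\]
Dividing by $\ln(1/t)$ and letting $\tgo$ yields
\[
\varliminf_{\tgo}\frac{N_2(t)}{\ln(1/t)}\geq (1+R^{-2})^{-\frac{d+1}{2}}\,(1-\tfrac{\varepsilon^2}{4})^{\frac{d-1}{2}},
\]
and now letting $R\to\infty$ and $\varepsilon\to 0+$ gives the bound $\geq 1$, as desired.

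The main obstacle, as the sketch above already shows, is not any deep analysis but rather being careful about the order of limits: one must choose the truncation endpoints so that the multiplicative constants $(1+R^{-2})^{-(d+1)/2}$ and $(1-\varepsilon^2/4)^{(d-1)/2}$ both converge to $1$ while the logarithmic length of the integration interval stays asymptotic to $\ln(1/t)$. Taking a fixed cutoff (like $r\le t^{-1}$ with lower limit $1$) only recovers a constant strictly less than $1$; the trick is to let $R\to\infty$ and $\varepsilon\to 0+$ \emph{after} taking $\tgo$, exploiting that the error terms $\ln(\varepsilon/R)$ are $O(1)$ and hence vanish after division by $\ln(1/t)$. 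Once this is set up correctly the estimate is immediate, and combined with the matching upper bound from \eqref{limsup} it establishes \eqref{sharp}.
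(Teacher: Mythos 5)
Your final argument is correct, but it takes a genuinely different route from the paper's. The paper works on the full range $[1,2t^{-1}]$ and obtains a quantitative $O(1)$ error: it splits $\frac{r^d}{(1+r^2)^{(d+1)/2}}=\frac{1}{r}-\bigl(\frac{1}{r}-\frac{r^d}{(1+r^2)^{(d+1)/2}}\bigr)$, observes that the second term is nonnegative and integrable on $[1,\infty)$ so that the resulting error $\Phi(t)$ is bounded uniformly in $t$, and then treats the factor $(1-t^2r^2/4)^{(d-1)/2}$ via the Bernoulli-type inequality $(1-x)^{(d-1)/2}\geq 1-\sigma_d x$, yielding an effective bound of the form $N_2(t)\geq\ln(1/t)-C_d$ from which the $\liminf$ statement is immediate. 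Your version instead truncates to the window $[R,\varepsilon t^{-1}]$, bounds the factor $(1+r^{-2})^{-(d+1)/2}$ below by its value at the left endpoint $R$ and the factor $(1-t^2r^2/4)^{(d-1)/2}$ below by its value at the right endpoint $\varepsilon t^{-1}$, integrates $r^{-1}$ exactly, and then sends $R\to\infty$ and $\varepsilon\to 0+$ \emph{after} $\tgo$ so the two prefactors tend to $1$ while the $\ln(\varepsilon/R)$ correction is killed by the division by $\ln(1/t)$. Your argument is softer and requires no auxiliary inequality, but it delivers only the limiting statement and not an explicit error constant; since Proposition~\ref{propint} asks precisely for the $\liminf$, both routes are equally adequate for the purpose at hand. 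One stylistic point: the middle of your write-up wanders through two false starts (the fixed cutoff and the $\delta$ cutoff) before arriving at the correct argument; in a final version those should be cut, leaving only the last paragraph's $R$-and-$\varepsilon$ argument.
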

\begin{proof}
We recall the following basic inequality. For $0<x<1$ and $d\geq 2$, we have
\begin{align}\label{bineq}
(1-x)^{\frac{d-1}{2}}\geq 1-\sigma_d x,
\end{align}
where $\sigma_d=\mathbbm{1}_{\set{2}}(d)+ \frac{(d-1)}{2}\cdot \mathbbm{1}_{[3,\infty)}(d)$. Based on the definition \eqref{N2def} of $N_2(t)$, we see that for any $t<2$ we have
\begin{align}\label{N2ineq}
N_2(t)&\geq \int_{1}^{2t^{-1}}\frac{dr\,r^d}{(1+r^2)^{\frac{d+1}{2}}}
\pthesis{1-\frac{t^2r^2}{4}}^{\frac{d-1}{2}}
=\int_{1}^{2t^{-1}}\frac{dr}{r}
\pthesis{1-\frac{t^2r^2}{4}}^{\frac{d-1}{2}} -\Phi(t)\\ \nonumber
&\geq  \int_{1}^{2t^{-1}}\frac{dr}{r}
\pthesis{1-\frac{\sigma_d}{8}\,t^2r^2}-\Phi(t)\geq  \ln\pthesis{\frac{1}{t}}-\frac{\sigma_d}{4}-\Phi(t),
\end{align}
where to obtain the second inequality, we have used $\eqref{bineq}$ and $\Phi(t)$ has been defined by
$$\Phi(t)=\int_{1}^{2t^{-1}}dr\pthesis{\frac{1}{r}-\frac{r^d}{(1+r^2)^{\frac{d+1}{2}}}}
\pthesis{1-\frac{t^2r^2}{4}}^{\frac{d-1}{2}}.$$
Next, notice that $\Phi(t)$ satisfies that
\begin{align}\label{phiineq}
\Phi(t)\leq \int_{1}^{\infty}dr
\pthesis{\frac{1}{r}-\frac{r^d}{(1+r^2)^{\frac{d+1}{2}}}}<\infty.
\end{align}
Finally, by combining  \eqref{N2ineq} with \eqref{phiineq}, we arrive at the desired result and this finishes the proof of  Proposition \ref{propint}.
\end{proof}

\bigskip
{\bf Acknowledgements}: I would like to thank the referee whose comments and corrections have improved the quality  of the paper. This  investigation has been supported by Universidad de Costa Rica, project 1563.
\\


\begin{thebibliography}{30} 



\bibitem{Acu} 
L. Acu\~{n}a Valverde, {\it Heat content for stable processes in domains of $\R^d$.} The Journal of Geometric Analysis, DOI10.1007/s12220-016-9688-9, 1-33, (2016).

\bibitem{Acu0} 
L. Acu\~{n}a Valverde, {\it Trace asymptotics for Fractional Schr\"{o}dinger operators.} Journal of Functional Analysis, {\bf 266}, 514-559, (2014).

\bibitem{Acu3}
L. Acu\~{n}a Valverde, R. Ba\~nuelos, {\it Heat content and small time asymptotics for Schr\"{o}dinger operators on $\R^d$.} Potential Analysis, DOI10.1007/s11118-014-9441-6, (2014).


\bibitem{BanKul}
R. Ba\~{n}uelos, T. Kulczycki, {\it Trace estimates for stable processes.}  Probability Theory and Related Fields, {\bf 142}, 313-338, (2008).

\bibitem{BanKulSiu} 
R. Ba\~{n}uelos, T. Kulczycki, B. Siudeja,{\it \, On the trace of symmetric stable processes on Lipschitz domains.}  Journal of Functional Analysis, {\bf 257(10)}, 3329-3352, (2009). 

\bibitem{BanYil}
 R. Ba\~nuelos, S. Yildirim, {\it Heat trace of non-local operators.}  Journal of the London Mathematical Society, {\bf 87(1)}, 304-318, (2013).

\bibitem{van5}
M. van den Berg, { \it On the asymptotics of the heat equation and bounds on traces
associated with the Dirichlet Laplacian.} Journal of Functional Analysis, {\bf 71}, 279-293, (1987).

\bibitem{van6}
M. van den Berg,  K. Gittins, 
{\it  Uniform bounds for the heat content of
open sets in Euclidean space.} Differential Geometry and its Applications, {\bf 40}, 67--85, (2015).

\bibitem{van7}
 M. van den Berg, P. Gilkey,
 {\it Heat flow out of a compact manifold.} The Journal of Geometric Analysis, {\bf 25},  1576--1601, (2015).
 
 \bibitem{van8}
M. van den Berg, K. Gittins, {\it On the heat content of a polygon.} The Journal of Geometric Analysis, DOI 10.1007/s12220-015-9626-2, 1-34, (2015).


\bibitem{vanden3} 
M. van den Berg,  {\it Heat Flow and Perimeter in $\R^m$.} Potential Analysis, {\bf 39}, 369-387,  (2013). 




\bibitem{Blum}
R. M. Blumenthal, R. K. Getoor,{ \it Some Theorems on Stable Processes.} Transactions of the American Mathematical Society, {\bf 95}, 263-273, (1960).

\bibitem{Chen1}
Z. Q. Chen, T. Kumagai, {\it Heat kernel estimates for stable-like processes on d-sets.}  Stochastic Processes and their applications,  {\bf 108}, 27-62, (2003).

\bibitem{Song}
Z. Q. Chen, R. Song, {\it Hardy inequality for Censored  Stable
processes.} Tohoku math.J., {\bf 55}, 439-450, (2003).

\bibitem{Evans}
L. C. Evans ,  R. F. Gariepy, {\it Measure theory and fine properties of
functions.} Studies in Advanced Mathematics. CRC Press, Boca Raton,
FL, (1992).



 \bibitem{Frank}
R. L. Frank, R. Seiringer, {\it Non--linear ground state 
representations and sharp Hardy--inequalities.} Journal of Functional Analysis, {\bf 255}, 3407-3430, (2008).
 
\bibitem{FracP}
N. Fusco, V. Millot, M. Morini, {\it A quantitative isoperimetric inequality for fractional perimeters.}
Journal of Functional Analysis, {\bf 261}, 697-715, (2011).


 \bibitem{Galerne}
B. Galerne, {\it Computations of the perimeter of measurable sets via their covariagram. Applications to random sets.}  Image Analysis and Stereology, {\bf 30}, 39-51, (2011).
 
 \bibitem{Lau}
L. Di Luca Lombardini, {\it Thesis: Fractional perimeter and nonlocal minimal surfaces.} 
\newline 
\href{http://arxiv.org/abs/1508.06241}{http://arxiv.org/abs/1508.06241}, (2015).

\bibitem{Mir} 
M. Miranda, D. Pallara,  F. Paronetto, M. Preunkert, {\it Short-time heat flow and functions of bounded variation in $\Rd$.}  Annales de la Faculte des Sciences de Toulouse, {\bf 16},  125-145, (2007).

\bibitem{Mor}
A. Morgante. {\it Thesis: Functions of Bounded Variation, Wavelets, and Application to Image Processing.} 
\newline 
\href{http://spectrum.library.concordia.ca/8994/1/MR20730.pdf}{http://spectrum.library.concordia.ca/8994/1/MR20730.pdf}, (2006).
\end{thebibliography}
\end{document}